\newtheorem{theorem}{Theorem}
\theoremstyle{plain}
\newtheorem{acknowledgement}{Acknowledgement}
\newtheorem{corollary}{Corollary}
\newtheorem{remark}{Remark}
\numberwithin{equation}{section}
\begin{document}
\author{}
\title{}
\maketitle

\begin{center}
\thispagestyle{empty} \pagestyle{myheadings} 
\markboth{\bf Yilmaz Simsek
}{\bf Computaion negative-order Euler numbers }

\textbf{{\Large Computation Methods for combinatorial sums and Euler type numbers related to new families of numbers}}

\bigskip

\textbf{Yilmaz Simsek}\\[0pt]

Department of Mathematics, Faculty of Science University of Akdeniz TR-07058
Antalya, Turkey,

\bigskip

ysimsek@akdeniz.edu.tr\\[0pt]

\bigskip

\textbf{{\large {Abstract}}}\medskip
\end{center}

\begin{quotation}
The aim of this article is to define some new families of the special numbers.
These numbers provide some further motivation for computation of
combinatorial sums involving binomial coefficients and the Euler
kind numbers of negative order. We can show that
these numbers are related to the well-known numbers and polynomials such as
the Stirling numbers of the second kind and the central factorial numbers,
the array polynomials, the rook numbers and polynomials, the Bernstein basis
functions and others. In order to derive our new identities and relations
for these numbers, we use a technique including the generating functions and
functional equations. Finally, we not only give a computational algorithm
for these numbers, but also some numerical values of these numbers and the
Euler numbers of negative order with tables. We also give some combinatorial
interpretations of our new numbers.
\end{quotation}

\noindent \textbf{2010 Mathematics Subject Classification.} 05A15, 11B37, 11B68, 11B73, 26C05, 68R05.

\bigskip

\noindent \textbf{Key Words.} Euler numbers and polynomials Generating
functions, Stirling numbers, Functional equations, Central
factorial numbers, Array polynomials, Binomial coefficients, Binomial sum, Combinatorial sum.

\section{Introduction}

In this paper we define new families of numbers and polynomials. By using
these new numbers and polynomials, we give computation algorithm for the
Euler type numbers of negative orders and combinatorial sums involving
binomial coefficients. We give combinatorial interpretations of these
numbers and polynomials.

Let $a$ and $b$ be real numbers and let $\lambda $ be real or complex
numbers.

For $n$ and $k$ nonegative integers, define $y_{3}(n,k;\lambda ;a,b)$ and $%
W_{n}(\lambda )$ by means of the following generating functions,
respectively:%
\begin{equation}
	F_{y_{3}}(t,k;\lambda ;a,b)=\frac{e^{bkt}}{k!}\left( \lambda
	e^{(a-b)t}+1\right) ^{k}=\sum_{n=0}^{\infty }y_{3}(n,k;\lambda ;a,b)\frac{%
		t^{n}}{n!}.  \label{St2a1}
\end{equation}

and%
\begin{equation}
	F_{w}(t;\lambda )=\frac{1}{\lambda e^{t}+\lambda ^{-1}e^{-t}+2}%
	=\sum_{n=0}^{\infty }W_{n}(\lambda )\frac{t^{n}}{n!}.  \label{w1}
\end{equation}

Our purpose of this paper is to derive and investigate some properties of
the numbers $y_{3}(n,k;\lambda ;a,b)$, the numbers $W_{n}(\lambda )$\ and
their generating functions. We can show how the numbers $y_{3}(n,k;\lambda
;a,b)$ and $W_{n}(\lambda )$ are related to certain sequences of numbers and
polynomials such as the Stirling numbers, the central factorial numbers, the
array polynomials, the Bernstein basis functions, the first and the second
kind Euler numbers and others. We compute some values of these numbers and
polynomials, which are given by the tables. By using these numbers and their
numerical values, we also compute the first and the second kind Euler
numbers of negative order.

A summary by sections follows:

In Section 2 is a background section containing basic definitions,
identities, relations and terminology we needed. In this section, we
breiefly give certain sequences of numbers and polynomials such as the Euler
numbers, the Stirling numbers, the central factorial numbers and the array
polynomials, the Bernstein basis functions. In Section 3, we some properties
of the numbers $y_{3}(n,k;\lambda ;a,b)$ and their generating function. By
using these functions and their functional equations, we derive various
identities and relations. We also give relations between these numbers and
the well-known special numbers and polynomials. By using partial derivative
equation of the generating function, we give recurrence relations and
derivative formula of these numbers. We compute a few values of the numbers $%
y_{3}(n,k;\lambda ;a,b)$. These values are given by the tables. In Section
4, by using the numbers $y_{3}(n,k;\lambda ;a,b)$, we compute the first and
the second kind Euler numbers of negative order. We give a few values of
these numbers, which are given by the tables. We also give a computational
algorithm in order to compute the second kind Euler polynomials and numbers
of negative order. In Section 5, we give some properties of the numbers $%
W_{n}(\lambda )$. We define a new sequence of polynomials related to the
numbers $W_{n}(\lambda )$. These numbers and polynomials are related to the
first and the second kind Euler type numbers. We compute some numerical
values of these numbers. In Section 6, we give further remarks and
observations. We also give not only some combinatorial applications,
including the rook numbers and polynomials, but also combinatorial
interpretation for special values $\lambda $, $a$ and $b$ for the numbers $%
y_{3}(n,k;\lambda ;a,b)$.

\section{Background}

Throughout this paper, we use the following standard notations: 
\begin{eqnarray*}
	\mathbb{N} &=&\{1,2,3,\ldots \}, \\
	\mathbb{N}_{0} &=&\{0,1,2,3,\ldots \}=\mathbb{N}\cup \{0\},
\end{eqnarray*}%
$\mathbb{Z}$ denotes the set of integers, $\mathbb{R}$ denotes the set of
real numbers and $\mathbb{C}$ denotes the set of complex numbers.

The principal value $\ln z$ is the logarithm whose imaginary part lies in
the interval $(-\pi ,\pi ]$. Furthermore%
\begin{equation*}
	0^{n}=\left\{ 
	\begin{array}{cc}
		1, & (n=0) \\ 
		0, & (n\in \mathbb{N)}%
	\end{array}%
	\right.
\end{equation*}%
and also if $\lambda $ is a complex number, we use the following notation:%
\begin{equation*}
	\left( 
	\begin{array}{c}
		\lambda \\ 
		0%
	\end{array}%
	\right) =1\text{ and }\left( 
	\begin{array}{c}
		\lambda \\ 
		v%
	\end{array}%
	\right) =\frac{\lambda (\lambda -1)\cdots (\lambda -v+1)}{v!}=\frac{\left(
		\lambda \right) _{v}}{v!}\text{ }(n\in \mathbb{N}\text{, }\lambda \in 
	\mathbb{C)}
\end{equation*}%
(\textit{cf}. \cite{Bayad}, \cite{Comtet}, \cite{SimsekNEW}, \cite%
{SrivastavaChoi2012}). For combinatorial example, we also use the notations
of Bona \cite{Bona}. Let $\{1,2,\ldots ,n\}$ be a distinctly $n$-element
set. In work of Bona \cite[PP. 11-13.]{Bona}, the shorter notation $[n]$
stands for $(n)_{k}$: the number $n(n-1)(n-2)\cdots (n-k+1)$ of all $k$%
-element lists from $[n]$ without repetition occurs in combinatorics.

The first kind Apostol-Euler polynomials of order $k$ are defined by means
of the following generating functions:%
\begin{equation}
	F_{P1}(t,x;k,\lambda )=\left( \frac{2}{\lambda e^{t}+1}\right)
	^{k}e^{tx}=\sum_{n=0}^{\infty }E_{n}^{(k)}(x;\lambda )\frac{t^{n}}{n!}.
	\label{Cad3}
\end{equation}

We observe that%
\begin{equation*}
	E_{n}^{(k)}(\lambda )=E_{n}^{(k)}(0;\lambda )
\end{equation*}%
which denotes the first kind Apostol-Euler numbers of order $k$ (\textit{cf}%
. \cite{Grademir}-\cite{Qi}; see also the references cited in each of these
earlier works).

Substituting $k=\lambda =1$ into (\ref{Cad3}), we have the first kind Euler
numbers $E_{n}=E_{n}^{(1)}(1)$, which are defined by means of the following
generating function:%
\begin{equation*}
	\frac{2}{e^{t}+1}=\sum_{n=0}^{\infty }E_{n}\frac{t^{n}}{n!},
\end{equation*}%
where $\left\vert t\right\vert <\pi $ (\textit{cf}. \cite{Boyadzhiev}-\cite%
{Qi}; see also the references cited in each of these earlier works).

The second kind Euler numbers $E_{n}^{\ast }$ are defined by means of the
following generating function:%
\begin{equation*}
	\frac{2}{e^{t}+e^{-t}}=\sum_{n=0}^{\infty }E_{n}^{\ast }\frac{t^{n}}{n!},
\end{equation*}%
where $\left\vert t\right\vert <\frac{\pi }{2}$ (\textit{cf}. \cite{Grademir}%
-\cite{Qi}; see also the references cited in each of these earlier works).

\begin{remark}
	By using generating functions of the numbers $E_{n}$ and $E_{n}^{\ast }$, we
	can easily give a relationship between both of these numbers as follows:%
	\begin{equation*}
		E_{n}^{\ast }=2^{n}E_{n}\left( \frac{1}{2}\right) 
	\end{equation*}%
	(\textit{cf}. \cite{DSKim.JIA}, \cite{MS.KIMjnt}, \cite{KimJNT}, \cite%
	{RJMP2010}, \cite{jnt2003}; see also the references cited in each of these
	earlier works).
\end{remark}

The second kind $\lambda $-Stirling numbers $S(n,v;\lambda )$, generalized
of the classical Stirling number of the second kind, are defined by means of
the following generating function:%
\begin{equation}
	F_{S}(t,v;\lambda )=\frac{\left( \lambda e^{t}-1\right) ^{v}}{v!}%
	=\sum_{n=0}^{\infty }S(n,v;\lambda )\frac{t^{n}}{n!},  \label{SN-1}
\end{equation}%
For the numbers $S(n,v;\lambda )$, the reader should consult \cite{Luo} and 
\cite{SimsekFPTA} see also (\textit{cf}. \cite{SimsekMANISA}, \cite%
{Srivastava2011}).

From (\ref{SN-1}), we easily see that%
\begin{equation*}
	S(n,v)=S(n,v;1),
\end{equation*}%
which denotes the Stirling numbers of the second kind. These numbers are
computing by the following formula:%
\begin{equation*}
	S(n,v)=\frac{1}{v!}\sum_{j=0}^{v}\left( 
	\begin{array}{c}
		v \\ 
		j%
	\end{array}%
	\right) (-1)^{j}\left( v-j\right) ^{n}
\end{equation*}%
(\textit{cf}. \cite{Grademir}-\cite{Qi}; see also the references cited in
each of these earlier works). A recurrence relation for these numbers is
given by%
\begin{equation*}
	S(n,k)=S(n-1,k-1)+kS(n-1,k),
\end{equation*}%
with%
\begin{equation*}
	S(n,0)=0\text{ (}n\in \mathbb{N}\text{); }S(n,n)=1\text{ (}n\in \mathbb{N}%
	\text{); }S(n,1)=1\text{ (}n\in \mathbb{N}\text{)}
\end{equation*}%
and $S(n,k)=0$ ($n<k$ or $k<0$) (\textit{cf}. \cite{Grademir}-\cite{Qi}; see
also the references cited in each of these earlier works).

Let $\lambda \in \mathbb{C}$ and $k\in \mathbb{N}_{0}$. The $\lambda $-array
polynomials $S_{v}^{n}(x)$ are defined by means of the following generating
function:%
\begin{equation}
	F_{A}(t,x,k;\lambda )=\frac{1}{k!}e^{tx}\left( \lambda e^{t}-1\right)
	^{k}=\sum_{n=0}^{\infty }S_{k}^{n}(x;\lambda )\frac{t^{n}}{n!}  \label{Ary}
\end{equation}%
(\textit{cf}. \cite{SimsekFPTA}, \cite{Bayad}). Substituting $\lambda =1$
into the above equation, we have%
\begin{equation*}
	S_{k}^{n}(x)=\frac{1}{k!}\sum_{j=0}^{k}(-1)^{k-j}\left( 
	\begin{array}{c}
		k \\ 
		j%
	\end{array}%
	\right) \left( x+j\right) ^{n}
\end{equation*}%
with%
\begin{equation*}
	S_{0}^{0}(x)=S_{n}^{n}(x)=1,S_{0}^{n}(x)=x^{n}.
\end{equation*}%
If $k>n$, than%
\begin{equation*}
	S_{k}^{n}(x)=0
\end{equation*}%
(\textit{cf}. \cite{Bayad}, \cite{Chan}, \cite{SimsekFPTA}, \cite{AM2014};
see also the references cited in each of these earlier works).

The second kind central factorial numbers $T(n,k)$ are defined by means of
the following generating function:%
\begin{equation}
	F_{T}(t,k)=\frac{1}{(2k)!}\left( e^{t}+e^{-t}-2\right)
	^{k}=\sum_{n=0}^{\infty }T(n,k)\frac{t^{2n}}{(2n)!}  \label{CT-1}
\end{equation}%
(\textit{cf}. \cite{Bona}, \cite{Cigler}, \cite{Comtet}, \cite{SrivastavaLiu}%
, \cite{AM2014}; see also the references cited in each of these earlier
works).

\begin{remark}
	In the work of Alayont et al. \cite{Aloyat-1}, we observe that the central
	factorial numbers are related to the rook polynomials, which count the
	number of ways of placing non-attacking rooks on a chess board. In the work
	of Alayont and Krzywonos \cite{Alayont}, we've noticed the following elegant
	result which are related to the he central factorial numbers and the rook
	numbers:The number of ways to place $k$ rooks on a size $m$ triangle board
	in three dimensions is equal to 
	\begin{equation*}
		T(m+1,m+1-k),
	\end{equation*}
	where $0\leq k\leq m$.
\end{remark}

In \cite{SimsekNEW}, we defined the numbers $y_{1}(n,k;\lambda )$ by means
of the following generating functions:%
\begin{equation}
	F_{y_{1}}(t,k;\lambda )=\frac{1}{k!}\left( \lambda e^{t}+1\right)
	^{k}=\sum_{n=0}^{\infty }y_{1}(n,k;\lambda )\frac{t^{n}}{n!}.  \label{ay1}
\end{equation}

\begin{theorem}
	Let $n$ be a positive integer. Then we have 
	\begin{equation}
		y_{1}(n,k;\lambda )=\frac{1}{k!}\sum_{j=0}^{k}\left( 
		\begin{array}{c}
			k \\ 
			j%
		\end{array}
		\right) j^{n}\lambda ^{j}.  \label{ay2}
	\end{equation}
\end{theorem}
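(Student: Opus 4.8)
The plan is to extract the coefficients of $t^{n}/n!$ directly from the closed form of the generating function $F_{y_{1}}(t,k;\lambda )$ in (\ref{ay1}), using nothing more than the binomial theorem and the exponential series. Since the factor $\lambda e^{t}+1$ is raised to the nonnegative integer power $k$, I would first expand $\left( \lambda e^{t}+1\right) ^{k}$ by the binomial theorem, rewriting it as a finite sum of the elementary exponentials $e^{jt}$. This converts the nonlinear generating function into a linear combination of exponentials whose power-series expansions are immediate, after which the result follows by comparing coefficients.

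First I would apply the binomial theorem to obtain
\begin{equation*}
\left( \lambda e^{t}+1\right) ^{k}=\sum_{j=0}^{k}\binom{k}{j}\left( \lambda e^{t}\right) ^{j}=\sum_{j=0}^{k}\binom{k}{j}\lambda ^{j}e^{jt}.
\end{equation*}
Next, I would insert the Maclaurin expansion $e^{jt}=\sum_{n=0}^{\infty }j^{n}t^{n}/n!$ for each $j$ and interchange the (finite) sum over $j$ with the sum over $n$, which yields
\begin{equation*}
\frac{1}{k!}\left( \lambda e^{t}+1\right) ^{k}=\sum_{n=0}^{\infty }\left( \frac{1}{k!}\sum_{j=0}^{k}\binom{k}{j}\lambda ^{j}j^{n}\right) \frac{t^{n}}{n!}.
\end{equation*}
Finally, comparing the coefficient of $t^{n}/n!$ in this expression with the defining series (\ref{ay1}) gives precisely the asserted formula (\ref{ay2}).

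The only step that could in principle require care is the interchange of the two summations, but here it is entirely routine: the sum over $j$ runs over the finite range $0\leq j\leq k$, so the rearrangement is just a finite regrouping of absolutely convergent power series and needs no delicate justification. Thus I do not anticipate any genuine obstacle; the whole argument reduces to matching coefficients. As a consistency check, note that for positive $n$ the convention $0^{n}=0$ recorded in the Background forces the $j=0$ term to vanish, so the right-hand side of (\ref{ay2}) is unaffected by that index, exactly as the statement requires.
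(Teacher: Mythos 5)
Your proof is correct and is exactly the standard argument this result rests on: expand $\left(\lambda e^{t}+1\right)^{k}$ by the binomial theorem, insert the exponential series, and compare coefficients of $t^{n}/n!$ with the defining expansion (\ref{ay1}). The paper itself states this theorem without proof (citing an earlier preprint), and your derivation, including the observation that the $j=0$ term vanishes for $n\geq 1$ by the convention $0^{n}=0$, is precisely the intended justification.
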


By substituting $\lambda =1$ into (\ref{ay2}), then we set%
\begin{equation}
	B(n,k)=k!y_{1}(n,k;1).  \label{CC2}
\end{equation}%
In \cite{golombek}, Golombek gave the following formula for (\ref{ay2}):%
\begin{equation*}
	B(n,k)=\frac{d^{n}}{dt^{n}}\left( e^{t}+1\right) ^{k}\left\vert
	_{t=0}\right. .
\end{equation*}

If we substitute $\lambda =-1$ into (\ref{ay2}), then we get the following
well-known numbers, which are so-called Stirling numbers of the second kind:%
\begin{equation*}
	S_{2}(n,k)=(-1)^{k}B(n,k).
\end{equation*}%
We \cite{SimsekNEW} gave the following \textbf{conjecture}:%
\begin{equation*}
	B(d;k)=(k^{d}+x_{1}k^{d-1}+x_{2}k^{d-2}+\cdots +x_{d-2}k^{2}+x_{d-1}k)2^{k-d}
\end{equation*}%
where $x_{1},x_{2},\ldots ,x_{d-1},d$ are positive integers.

In \cite{SimsekNEW}, we defined the numbers $y_{2}(n,k;\lambda )$ by
means of the following generating functions:%
\begin{equation}
	F_{y_{2}}(t,k;\lambda )=\frac{1}{(2k)!}\left( \lambda e^{t}+\lambda
	^{-1}e^{-t}+2\right) ^{k}=\sum_{n=0}^{\infty }y_{2}(n,k;\lambda )\frac{t^{n}%
	}{n!}.  \label{C1}
\end{equation}

Note that there is one generating function for each value of $k$.

\begin{theorem}
	\begin{equation}
		y_{2}(n,k;\lambda )=\frac{1}{\left( 2k\right) !}\sum_{j=0}^{k}\left( 
		\begin{array}{c}
			k \\ 
			j%
		\end{array}
		\right) 2^{k-j}\sum_{l=0}^{j}\left( 
		\begin{array}{c}
			j \\ 
			l%
		\end{array}
		\right) \left( 2l-j\right) ^{n}\lambda ^{2l-j}  \label{CCC3}
	\end{equation}
\end{theorem}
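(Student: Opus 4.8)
The goal is to derive an explicit closed-form expression for the numbers $y_{2}(n,k;\lambda)$ starting from their generating function \eqref{C1}.

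The plan is to extract $y_{2}(n,k;\lambda)$ directly from its generating function \eqref{C1} by expanding the $k$-th power with the binomial theorem applied twice, and then matching the coefficient of $t^{n}/n!$. First I would treat the trinomial $\lambda e^{t}+\lambda^{-1}e^{-t}+2$ as a binomial, grouping the first two terms against the constant $2$. The binomial theorem then gives
\begin{equation*}
	\left( \lambda e^{t}+\lambda^{-1}e^{-t}+2\right) ^{k}=\sum_{j=0}^{k}\binom{k}{j}2^{k-j}\left( \lambda e^{t}+\lambda^{-1}e^{-t}\right) ^{j}.
\end{equation*}

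Next I would expand each inner power $\left( \lambda e^{t}+\lambda^{-1}e^{-t}\right) ^{j}$ by the binomial theorem a second time. The $l$-th term contributes $\binom{j}{l}(\lambda e^{t})^{l}(\lambda^{-1}e^{-t})^{j-l}$; collecting the powers of $\lambda$ yields $\lambda^{l-(j-l)}=\lambda^{2l-j}$, while collecting the exponentials yields $e^{(l-(j-l))t}=e^{(2l-j)t}$. Hence
\begin{equation*}
	\left( \lambda e^{t}+\lambda^{-1}e^{-t}\right) ^{j}=\sum_{l=0}^{j}\binom{j}{l}\lambda^{2l-j}e^{(2l-j)t}.
\end{equation*}

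Finally I would insert the Taylor expansion $e^{(2l-j)t}=\sum_{n=0}^{\infty }(2l-j)^{n}t^{n}/n!$, combine this with the factor $\tfrac{1}{(2k)!}$ from \eqref{C1}, interchange the (finite outer, single infinite) summations, and read off the coefficient of $t^{n}/n!$. Comparing with the right-hand side of \eqref{C1} then produces exactly the asserted formula \eqref{CCC3}. The only place requiring genuine care is the exponent bookkeeping in the second expansion, namely verifying that the two occurrences of $\lambda^{\pm 1}$ and of $e^{\pm t}$ fuse into the single exponent $2l-j$; once this is confirmed the remaining steps are routine, and no convergence issues arise since the $j$- and $l$-sums are finite and the exponential series converges everywhere.
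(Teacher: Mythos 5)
Your proof is correct: the double binomial expansion of $\left(\lambda e^{t}+\lambda^{-1}e^{-t}+2\right)^{k}$ followed by the Taylor expansion of $e^{(2l-j)t}$ and coefficient comparison yields exactly \eqref{CCC3}, and the exponent bookkeeping $\lambda^{l}\lambda^{-(j-l)}=\lambda^{2l-j}$, $e^{lt}e^{-(j-l)t}=e^{(2l-j)t}$ is right. The paper states this theorem without printing a proof, but your argument is precisely the method the author uses for the analogous explicit formulas (e.g.\ for $y_{1}$ and $y_{3}$), so there is nothing further to add.
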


We will see in the following section that the above formulas provide
important insight when we are trying to compute the first and the second
kind Euler numbers of negative order.

\section{A family of new numbers $y_{3}(n,k;\protect\lambda ;a,b)$}

In this section, we give some properties of numbers $y_{3}(n,k;\lambda ;a,b)$%
. We compute a few values of these numbers. These numbers are related to the
combinatorial sums such as the stirling numbers, the array polynomials.

By using (\ref{St2a1}), we give the following explicit formula for the
numbers $y_{3}(n,k;\lambda ;a,b)$:

\begin{theorem}
	\begin{equation}
		y_{3}(n,k;\lambda ;a,b)=\frac{1}{k!}\sum_{j=0}^{k}\left( 
		\begin{array}{c}
			k \\ 
			j%
		\end{array}
		\right) \lambda ^{j}(bk+j(a-b))^{n},  \label{St2a}
	\end{equation}
\end{theorem}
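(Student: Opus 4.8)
The plan is to extract the coefficients $y_3(n,k;\lambda;a,b)$ directly from the generating function in \eqref{St2a1} by expanding the product and collecting the coefficient of $t^n/n!$. First I would rewrite the generating function
\begin{equation*}
F_{y_{3}}(t,k;\lambda ;a,b)=\frac{e^{bkt}}{k!}\left( \lambda e^{(a-b)t}+1\right)^{k}
\end{equation*}
by applying the binomial theorem to the factor $\left(\lambda e^{(a-b)t}+1\right)^{k}$, which gives
\begin{equation*}
\left(\lambda e^{(a-b)t}+1\right)^{k}=\sum_{j=0}^{k}\binom{k}{j}\lambda^{j}e^{(a-b)jt}.
\end{equation*}
This converts the product into a single sum of exponentials, which is the natural intermediate form since exponentials have trivial Taylor expansions.

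Next I would fold the prefactor $e^{bkt}$ into each summand, combining the exponents so that
\begin{equation*}
F_{y_{3}}(t,k;\lambda ;a,b)=\frac{1}{k!}\sum_{j=0}^{k}\binom{k}{j}\lambda^{j}e^{(bk+j(a-b))t}.
\end{equation*}
The key observation is that $e^{\alpha t}=\sum_{n=0}^{\infty}\alpha^{n}\frac{t^{n}}{n!}$, so for each $j$ the exponential $e^{(bk+j(a-b))t}$ contributes $(bk+j(a-b))^{n}$ to the coefficient of $t^{n}/n!$. Since the sum over $j$ is finite, I can interchange the finite sum over $j$ with the infinite series in $n$ without any convergence concerns, and then equate coefficients of $t^{n}/n!$ on both sides against the defining series $\sum_{n=0}^{\infty}y_{3}(n,k;\lambda;a,b)\frac{t^{n}}{n!}$.

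This immediately yields the claimed formula \eqref{St2a}. I do not anticipate a genuine obstacle here, since the argument is a routine coefficient comparison; the only point requiring minor care is the legitimacy of swapping summation orders, which is justified because the $j$-sum is finite and each exponential series converges absolutely for all $t$. One could alternatively phrase the proof as recognizing each $e^{(bk+j(a-b))t}$ as the generating function whose $n$-th coefficient is the $n$-th power of its exponential rate, but the direct expansion is cleaner and more self-contained.
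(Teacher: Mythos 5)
Your proof is correct and is exactly the derivation the paper intends: the paper states the theorem with only the remark ``by using (\ref{St2a1})'', and the standard route is precisely your binomial expansion of $\left(\lambda e^{(a-b)t}+1\right)^{k}$, absorption of $e^{bkt}$ into each exponential, and comparison of coefficients of $t^{n}/n!$. No gaps; the finiteness of the $j$-sum justifies the interchange as you note.
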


\subsubsection{Setting $a=b$ in (\protect\ref{St2a}), we have%
	\protect\begin{equation*}
		y_{3}(n,k;\protect\lambda ;a,a)=\frac{\left( ak\right) ^{n}}{k!}\left( 1+%
		\protect\lambda \right) ^{k}.
		\protect\end{equation*}%
	For $k=0,1,2,3,4,5$ compute a few values of the numbers $y_{3}(n,k;\protect%
	\lambda ;a,b)$ given by Equation (\protect\ref{St2a}) as follows:%
	\protect\begin{equation*}
		y_{3}(n,0;\protect\lambda ;a,b)=0,
		\protect\end{equation*}%
	\protect\begin{equation*}
		y_{3}(n,1;\protect\lambda ;a,b)=a^{n}\protect\lambda +b^{n},
		\protect\end{equation*}%
	\protect\begin{equation*}
		y_{3}(n,2;\protect\lambda ;a,b)=\frac{\left( 2b+2(a-b)\right) ^{n}}{2}%
		\protect\lambda ^{2}+(b+a)^{n}\protect\lambda +2^{n-1}b^{n},
		\protect\end{equation*}%
	\protect\begin{equation*}
		y_{3}(n,3;\protect\lambda ;a,b)=\frac{(3b+3(a-b))^{n}}{6}\protect\lambda %
		^{3}+\frac{(3b+2(a-b))^{n}}{2}\protect\lambda ^{2}+\frac{(2b+a)^{n}}{2}%
		\protect\lambda +\frac{3^{n-1}b^{n}}{2},
		\protect\end{equation*}%
	\protect\begin{eqnarray*}
		y_{3}(n,4;\protect\lambda ;a,b) &=&\frac{(4b+4(a-b))^{n}}{24}\protect\lambda %
		^{4}+\frac{(4b+3(a-b))^{n}}{6}\protect\lambda ^{3}+\frac{(4b+2(a-b))^{n}}{4}%
		\protect\lambda ^{2} \\
		&&+\frac{(3b+a)^{n}}{6}\protect\lambda +\frac{4^{n-1}b^{n}}{6},
		\protect\end{eqnarray*}%
	\protect\begin{eqnarray*}
		y_{3}(n,5;\protect\lambda ;a,b) &=&\frac{(5b+5(a-b))^{n}}{120}\protect%
		\lambda ^{5}+\frac{(5b+4(a-b))^{n}}{24}\protect\lambda ^{4}+\frac{%
			(5b+3(a-b))^{n}}{12}\protect\lambda ^{3} \\
		&&+\frac{(5b+2(a-b))^{n}}{12}\protect\lambda ^{2}+\frac{(4b+a)^{n}}{24}%
		\protect\lambda +\frac{5^{n-1}b^{n}}{24}.
		\protect\end{eqnarray*}%
	Identities and Relations}

Here, we give some identities and relations for special values of the
numbers $y_{3}(n,k;\lambda ;a,b)$, which are given below.%
\begin{equation*}
	y_{3}(n,k;\lambda ;1,1)=\frac{k^{n}}{k!}\left( 1+\lambda \right) ^{k},
\end{equation*}%
\begin{equation*}
	y_{3}(0,k;\lambda ;1,1)=\frac{1}{k!}\left( 1+\lambda \right) ^{k},
\end{equation*}%
\begin{equation*}
	y_{3}(k,k;\lambda ;1,1)=\frac{k^{k}}{k!}\left( 1+\lambda \right) ^{k},
\end{equation*}%
\begin{equation*}
	y_{3}(k,k;1;1,1)=\frac{2^{k}k^{k}}{k!}.
\end{equation*}%
If substitute $a=1$ and $b=0$ into (\ref{St2a}), than we get%
\begin{equation*}
	y_{2}(n,k;\lambda )=y_{3}(n,k;\lambda ;1,0).
\end{equation*}%
If substitute $a=1$ and $b=-1$ into (\ref{St2a}), than we obtain%
\begin{equation*}
	y_{3}(n,k;\lambda ;1,-1)=\sum_{m=0}^{n}\left( 
	\begin{array}{c}
		n \\ 
		m%
	\end{array}%
	\right) (-1)^{n-m}k^{n-m}2^{m}y_{1}(m,k;\lambda ).
\end{equation*}%
If substitute $\lambda =-1$, $a=-1$ and $b=0$\ into (\ref{St2a}), than we
have%
\begin{equation*}
	S_{2}(n,k)=(-1)^{k+n}y_{3}(n,k;-1;-1,0).
\end{equation*}%
Now by using functional equation for the generating functions, we derive
some identities related to the numbers $y_{3}(n,k;\lambda ;a,b)$, $%
y_{1}(n,k;\lambda )$, $S_{2}(n,k)$, the Bernstein basis functions and the
array polynomials.

\begin{theorem}
	\begin{equation*}
		y_{3}(n,k;\lambda ;a,b)=\sum_{j=0}^{k}\sum_{m=0}^{n}\left( 
		\begin{array}{c}
			n \\ 
			m%
		\end{array}
		\right) a^{m}b^{n-m}y_{1}(m,j;\lambda )S_{2}(n-m,k-j).
	\end{equation*}
\end{theorem}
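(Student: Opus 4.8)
The plan is to derive this identity by factoring the generating function $F_{y_3}$ from \eqref{St2a1} into a product of two generating functions whose coefficients are exactly the sequences appearing on the right-hand side, and then extract coefficients by the Cauchy product rule for exponential generating functions.

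First I would start from \eqref{St2a1} and rewrite the factor $\lambda e^{(a-b)t}+1$ so that the $\lambda$-Stirling/array structure becomes visible. The key algebraic observation is that
\begin{equation*}
	\lambda e^{(a-b)t}+1 = \left(\lambda e^{at-bt}\right)+1,
\end{equation*}
but to connect with $y_1$ and $S_2$ I would instead split the exponential $e^{bkt}$ and the binomial power separately. Concretely, I would write
\begin{equation*}
	F_{y_3}(t,k;\lambda;a,b) = e^{bt\cdot k}\cdot\frac{1}{k!}\left(\lambda e^{(a-b)t}+1\right)^{k},
\end{equation*}
and recognize the second factor as a rescaled $y_1$-type generating function. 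Comparing with \eqref{ay1}, the factor $\frac{1}{k!}(\lambda e^{s}+1)^{k}$ generates $y_1(\cdot,k;\lambda)$ in the variable $s$; here $s=(a-b)t$. The difficulty is that the right-hand side of the claim is a double convolution over both $j$ (a splitting of $k$) and $m$ (a splitting of $n$), so a single rescaling is not enough — I need a product of two independent generating functions in $t$.

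The cleaner route, which I expect to be the main engine of the proof, is to write the functional equation
\begin{equation*}
	F_{y_3}(t,k;\lambda;a,b) = \sum_{j=0}^{k} F_{y_1}\bigl(at,j;\lambda\bigr)\,F_{S}\bigl(bt,k-j;1\bigr),
\end{equation*}
where $F_{y_1}$ is from \eqref{ay1} and $F_S$ from \eqref{SN-1} with $\lambda=1$ (so $F_S$ generates $S_2(n,k)$). To justify this I would expand $\frac{1}{k!}(\lambda e^{at}-\lambda e^{bt}+e^{bt}\cdot\text{(something)})$ — more precisely, I would verify the identity at the level of the generating factors by using the binomial-type convolution $\frac{1}{k!}(X+Y)^{k}=\sum_{j=0}^{k}\frac{1}{j!}X^{j}\cdot\frac{1}{(k-j)!}Y^{k-j}$ with $X=\lambda e^{at}$ suitably chosen and $Y$ matching the $(\lambda e^{t}-1)$ structure of $F_S$. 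This is the step where I must choose the split of $e^{bkt}(\lambda e^{(a-b)t}+1)^{k}$ correctly so that one factor becomes $(\lambda e^{at}+1)^{j}$-type and the other becomes $(e^{bt}-1)^{k-j}$-type; checking that the $a^{m}b^{n-m}$ weights and the $e^{at}$, $e^{bt}$ rescalings come out consistent is the genuine obstacle and the part I would verify by careful bookkeeping rather than hand-waving.

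Once the functional equation is in place, the conclusion is routine: I would apply the Cauchy product for exponential generating functions to each summand, giving
\begin{equation*}
	F_{y_1}(at,j;\lambda)\,F_{S}(bt,k-j;1) = \sum_{n=0}^{\infty}\left(\sum_{m=0}^{n}\binom{n}{m}a^{m}b^{n-m}y_1(m,j;\lambda)S_2(n-m,k-j)\right)\frac{t^{n}}{n!},
\end{equation*}
where the factors $a^{m}$ and $b^{n-m}$ arise precisely from the rescalings $at$ and $bt$ in the two generating functions. Summing over $j$ from $0$ to $k$ and comparing the coefficient of $\frac{t^{n}}{n!}$ with the definition \eqref{St2a1} of $y_3(n,k;\lambda;a,b)$ yields the claimed identity. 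The only thing to confirm at the end is that the index ranges match (the inner sum over $m$ runs $0\le m\le n$ and the outer over $0\le j\le k$), which follows automatically from the vanishing conventions $S_2(n,k)=0$ for $n<k$ and $y_1(m,j;\lambda)=0$ outside the valid range.
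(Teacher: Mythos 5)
Your proposal is correct and follows essentially the same route as the paper: the paper's proof states exactly the functional equation $F_{y_{3}}(t,k;\lambda ;a,b)=\sum_{j=0}^{k}F_{y_{1}}(at,j;\lambda )F_{S}(bt,k-j;1)$ and then extracts coefficients by the Cauchy product, just as you do. The only step you deferred to ``careful bookkeeping'' is the one-line identity $e^{bkt}\left( \lambda e^{(a-b)t}+1\right) ^{k}=\left( \lambda e^{at}+e^{bt}\right) ^{k}=\left( (\lambda e^{at}+1)+(e^{bt}-1)\right) ^{k}$, which after dividing by $k!$ and expanding by the binomial theorem yields precisely that functional equation.
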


\begin{proof}
	By combining (\ref{SN-1}) and (\ref{ay1}) with (\ref{St2a1}), we get the
	following functional equation:%
	\begin{equation*}
		F_{y_{3}}(t,k;\lambda ;a,b)=\sum_{j=0}^{k}F_{y_{1}}(at,j;\lambda
		)F_{S}(bt,k-j;1).
	\end{equation*}%
	From this equation, we get%
	\begin{equation*}
		\sum_{n=0}^{\infty }y_{3}(n,k;\lambda ;a,b)\frac{t^{n}}{n!}%
		=\sum_{j=0}^{k}\sum_{n=0}^{\infty }a^{n}y_{1}(n,j;\lambda )\frac{t^{n}}{n!}%
		\sum_{n=0}^{\infty }b^{n}S_{2}(n,k-j)\frac{t^{n}}{n!}.
	\end{equation*}%
	Therefore%
	\begin{equation*}
		\sum_{n=0}^{\infty }y_{3}(n,k;\lambda ;a,b)\frac{t^{n}}{n!}%
		=\sum_{n=0}^{\infty }\sum_{j=0}^{k}\sum_{m=0}^{n}\left(
		\begin{array}{c}
			n \\
			m%
		\end{array}%
		\right) a^{m}b^{n-m}y_{1}(m,j;\lambda )S_{2}(n-m,k-j)\frac{t^{n}}{n!}.
	\end{equation*}%
	Comparing the coefficients of $\frac{t^{n}}{n!}$ on both sides of the above
	equation, we arrive at the desired result.
\end{proof}

A relation between the numbers $y_{1}(n,k;\lambda )$ and $y_{3}(n,k;\lambda
;a,b)$ is given by the following theorem:

\begin{theorem}
	\begin{equation*}
		y_{3}(n,k;\lambda ;a,b)=\sum_{j=0}^{n}\left( 
		\begin{array}{c}
			n \\ 
			j%
		\end{array}
		\right) (a-b)^{j}(bk)^{n-j}y_{1}(j,k;\lambda ).
	\end{equation*}
\end{theorem}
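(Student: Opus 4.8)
The plan is to establish the identity at the level of generating functions, exactly as in the proof of the preceding theorem. The key observation is that the defining generating function (\ref{St2a1}) for $y_3$ factors as a product of the exponential $e^{bkt}$ and a rescaled copy of the generating function (\ref{ay1}) for the numbers $y_1$. Concretely, replacing $t$ by $(a-b)t$ in (\ref{ay1}) gives
\[
F_{y_1}((a-b)t,k;\lambda)=\frac{1}{k!}\left(\lambda e^{(a-b)t}+1\right)^{k},
\]
so that, comparing with (\ref{St2a1}),
\[
F_{y_3}(t,k;\lambda;a,b)=e^{bkt}\,F_{y_1}((a-b)t,k;\lambda).
\]

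Next I would expand each of the two factors as a power series in $t$ and multiply them. On one hand $e^{bkt}=\sum_{n=0}^{\infty}(bk)^{n}\frac{t^{n}}{n!}$, and on the other hand substituting $(a-b)t$ into the series in (\ref{ay1}) yields $F_{y_1}((a-b)t,k;\lambda)=\sum_{n=0}^{\infty}(a-b)^{n}y_1(n,k;\lambda)\frac{t^{n}}{n!}$. Applying the Cauchy product rule for exponential generating functions then gives
\[
F_{y_3}(t,k;\lambda;a,b)=\sum_{n=0}^{\infty}\left(\sum_{j=0}^{n}\binom{n}{j}(a-b)^{j}(bk)^{n-j}y_1(j,k;\lambda)\right)\frac{t^{n}}{n!}.
\]
Comparing the coefficients of $\frac{t^{n}}{n!}$ with those in the defining expansion (\ref{St2a1}) immediately produces the claimed formula.

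There is essentially no genuine obstacle here: once the factorization $F_{y_3}=e^{bkt}F_{y_1}((a-b)t,k;\lambda)$ is spotted, what remains is the routine binomial bookkeeping inherent in multiplying two exponential generating functions. The only point that requires a little care is applying the rescaling $t\mapsto(a-b)t$ consistently to both the closed form and the series expansion of $F_{y_1}$, so that the factor $(a-b)^{j}$ attaches to $y_1(j,k;\lambda)$ and the factor $(bk)^{n-j}$ comes entirely from the expansion of $e^{bkt}$.
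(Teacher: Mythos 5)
Your argument is correct and coincides with the paper's own proof: both rest on the functional equation $F_{y_{3}}(t,k;\lambda;a,b)=e^{bkt}F_{y_{1}}((a-b)t,k;\lambda)$, followed by the Cauchy product of the two exponential series and comparison of coefficients of $\frac{t^{n}}{n!}$. No further comment is needed.
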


\begin{proof}
	By using (\ref{ay1}) and (\ref{St2a1}), we obtain the following functional
	equation:%
	\begin{equation}
		F_{y_{3}}(t,k;\lambda ;a,b)=e^{bkt}F_{y_{1}}((a-b)t,k;\lambda ).  \label{BER}
	\end{equation}%
	From this equation, we get%
	\begin{equation*}
		\sum_{n=0}^{\infty }y_{3}(n,k;\lambda ;a,b)\frac{t^{n}}{n!}%
		=\sum_{n=0}^{\infty }(bk)^{n}\frac{t^{n}}{n!}\sum_{n=0}^{\infty
		}(a-b)^{n}y_{1}(n,k;\lambda )\frac{t^{n}}{n!}.
	\end{equation*}%
	Therefore%
	\begin{equation*}
		\sum_{n=0}^{\infty }y_{3}(n,k;\lambda ;a,b)\frac{t^{n}}{n!}%
		=\sum_{n=0}^{\infty }\sum_{j=0}^{n}\left(
		\begin{array}{c}
			n \\
			j%
		\end{array}%
		\right) (a-b)^{j}(bk)^{n-j}y_{1}(j,k;\lambda )\frac{t^{n}}{n!}.
	\end{equation*}%
	Comparing the coefficients of $\frac{t^{n}}{n!}$ on both sides of the above
	equation, we arrive at the desired result.
\end{proof}

Combining (\ref{Ary}) and (\ref{St2a1}), we obtain the following functional
equation:%
\begin{equation*}
	F_{y_{3}}(t,k;-1;a,b)=(-1)^{k}F_{A}\left( (a-b)t,\frac{b}{a-b},k\right) .
\end{equation*}

By using this equation, a relation between the array polynomials and the
numbers $y_{3}(n,k;-1;a,b)$ by the following theorem:

\begin{theorem}
	\begin{equation*}
		y_{3}(n,k;-1;a,b)=(-1)^{k}(a-b)^{n}S_{v}^{n}\left( \frac{b}{a-b}\right) .
	\end{equation*}
\end{theorem}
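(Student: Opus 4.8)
The plan is to obtain the identity as a direct coefficient comparison from the functional equation
$F_{y_{3}}(t,k;-1;a,b)=(-1)^{k}F_{A}\left( (a-b)t,\frac{b}{a-b},k\right)$
displayed immediately before the statement, exactly in the spirit of the three preceding theorems in this section. I would treat this functional equation as the given input and not re-derive it, since it already follows from combining (\ref{Ary}) and (\ref{St2a1}).

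First I would expand the left-hand side. By the defining series (\ref{St2a1}) specialized to $\lambda=-1$, one has $F_{y_{3}}(t,k;-1;a,b)=\sum_{n=0}^{\infty}y_{3}(n,k;-1;a,b)\frac{t^{n}}{n!}$, so nothing is required there. Next I would expand the right-hand side using the array-polynomial generating function (\ref{Ary}) at $\lambda=1$, namely $F_{A}(s,x,k)=\sum_{n=0}^{\infty}S_{k}^{n}(x)\frac{s^{n}}{n!}$, and then substitute $s=(a-b)t$ together with $x=\frac{b}{a-b}$. The single substantive point is tracking the dilation $s\mapsto(a-b)t$: it converts each term $\frac{s^{n}}{n!}$ into $(a-b)^{n}\frac{t^{n}}{n!}$, whence $(-1)^{k}F_{A}\left( (a-b)t,\frac{b}{a-b},k\right)=(-1)^{k}\sum_{n=0}^{\infty}(a-b)^{n}S_{k}^{n}\left( \frac{b}{a-b}\right) \frac{t^{n}}{n!}$.

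Finally I would equate the two series and compare the coefficients of $\frac{t^{n}}{n!}$ on both sides of the functional equation, which yields $y_{3}(n,k;-1;a,b)=(-1)^{k}(a-b)^{n}S_{k}^{n}\left( \frac{b}{a-b}\right)$. The main obstacle is purely bookkeeping rather than conceptual: I would check that the factor $(a-b)^{n}$ is produced exactly once, arising only from the dilation of the variable and not a second time from the shift value $x=\frac{b}{a-b}$, which is merely plugged into the polynomial $S_{k}^{n}$ before extracting coefficients, and that the sign $(-1)^{k}$ carried by the functional equation matches the $(-1)^{k}$ in the claim. I would also note that the symbol $S_{v}^{n}$ in the displayed statement should read $S_{k}^{n}$, the $k$-th array polynomial of (\ref{Ary}), since that is precisely the object the functional equation produces.
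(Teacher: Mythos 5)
Your route coincides with the paper's: the paper gives no explicit proof either, only the functional equation followed by the theorem, and the intended argument is exactly the coefficient comparison you describe. Your bookkeeping of the dilation (one factor $(a-b)^{n}$ from $s\mapsto (a-b)t$, none from the shift) is right, as is your correction of $S_{v}^{n}$ to $S_{k}^{n}$. The genuine gap sits in the one step you explicitly decline to check: the functional equation does \emph{not} follow from combining (\ref{Ary}) and (\ref{St2a1}) as stated. Setting $\lambda =-1$ in (\ref{St2a1}) gives
\begin{equation*}
F_{y_{3}}(t,k;-1;a,b)=(-1)^{k}\frac{e^{bkt}}{k!}\left( e^{(a-b)t}-1\right) ^{k},
\end{equation*}
whereas $F_{A}\left( (a-b)t,\tfrac{b}{a-b},k\right) =\tfrac{1}{k!}e^{bt}\left( e^{(a-b)t}-1\right) ^{k}$; the exponential prefactors $e^{bkt}$ and $e^{bt}$ disagree for $k\neq 1$. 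The shift must be $x=\tfrac{bk}{a-b}$, so the functional equation and the theorem should read
\begin{equation*}
y_{3}(n,k;-1;a,b)=(-1)^{k}(a-b)^{n}S_{k}^{n}\left( \frac{bk}{a-b}\right) .
\end{equation*}

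As printed, the identity fails already for $k=2$, $n=3$: formula (\ref{St2a}) gives $y_{3}(3,2;-1;a,b)=3(a+b)(a-b)^{2}$, while $(a-b)^{3}S_{2}^{3}\left( \tfrac{b}{a-b}\right) =3a(a-b)^{2}$, since $S_{2}^{3}(x)=3(x+1)$. So the error you were guarding against (double-counting $(a-b)^{n}$) is not the one that occurs; the prefactor mismatch is. A robust fix is to bypass the generating functions altogether: compare (\ref{St2a}) at $\lambda =-1$, namely $\tfrac{1}{k!}\sum_{j=0}^{k}\binom{k}{j}(-1)^{j}(bk+j(a-b))^{n}$, with the explicit formula $S_{k}^{n}(x)=\tfrac{1}{k!}\sum_{j=0}^{k}(-1)^{k-j}\binom{k}{j}(x+j)^{n}$; factoring $(a-b)^{n}$ out of each term forces $x=\tfrac{bk}{a-b}$ and proves the corrected statement directly.
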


By replacing $\lambda $ by $-\lambda ^{2}$\ in (\ref{Ary}), using (\ref{ay1}%
) and (\ref{St2a1}), we also have the following functional equation:%
\begin{equation*}
	F_{y_{3}}(2t,k;-\lambda ^{2};a,b)=(-1)^{k}k!F_{y_{1}}((a-b)t,k;\lambda
	)F_{A}\left( (a-b)t,\frac{2b}{a-b},k;\lambda \right) .
\end{equation*}%
By using this equation with (\ref{Ary}), (\ref{ay1}) and (\ref{St2a1}), we
get the following theorem:

\begin{theorem}
	\begin{equation*}
		y_{3}(n,k;-\lambda ^{2};a,b)=(-1)^{k}k!\left( \frac{a-b}{2}\right)
		^{n}\sum_{j=0}^{n}\left( 
		\begin{array}{c}
			n \\ 
			j%
		\end{array}
		\right) y_{1}(n,j;\lambda )S_{k}^{n-j}\left( \frac{b}{a-b}\right) .
	\end{equation*}
\end{theorem}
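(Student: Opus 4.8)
\emph{Proof proposal.} The plan is to mirror the generating-function method already used for the preceding theorems: begin from the functional equation displayed immediately before the statement, expand each factor as an exponential generating series, multiply the two series on the right by the Cauchy product, and then compare coefficients of $t^{n}/n!$.

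First I would take as input the functional equation
\[
F_{y_{3}}(2t,k;-\lambda ^{2};a,b)=(-1)^{k}k!\,F_{y_{1}}((a-b)t,k;\lambda )\,F_{A}\!\left( (a-b)t,\tfrac{2b}{a-b},k;\lambda \right),
\]
which rests on the factorization $1-\lambda ^{2}e^{2(a-b)t}=\bigl(1-\lambda e^{(a-b)t}\bigr)\bigl(1+\lambda e^{(a-b)t}\bigr)$ and on absorbing the prefactor $e^{2bkt}$ into the weight $e^{tx}$ of the array-polynomial generating function (\ref{Ary}). On the left, expanding (\ref{St2a1}) at the argument $2t$ gives $\sum_{n\geq 0}2^{n}y_{3}(n,k;-\lambda ^{2};a,b)\,t^{n}/n!$, so the rescaling $t\mapsto 2t$ contributes a factor $2^{n}$ to the coefficient of $t^{n}/n!$.

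Next I would expand the two right-hand factors from (\ref{ay1}) and (\ref{Ary}), each evaluated at the argument $(a-b)t$. This extracts a factor $(a-b)^{m}$ from the $y_{1}$-series and $(a-b)^{n-m}$ from the array series, and the Cauchy product turns their product into a binomial convolution $\sum_{j=0}^{n}\binom{n}{j}(a-b)^{n}\,y_{1}(\,\cdot\,,\,\cdot\,;\lambda )\,S_{k}^{n-j}(\,\cdot\,)$. Equating the coefficients of $t^{n}/n!$ on the two sides and after dividing through by $2^{n}$ then produces the scalar $(a-b)^{n}/2^{n}=\bigl((a-b)/2\bigr)^{n}$ in front, which is exactly the prefactor appearing in the claimed identity.

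The step I expect to be the main obstacle is the simultaneous bookkeeping of two independent scale factors — the $2^{n}$ produced by evaluating $F_{y_{3}}$ at $2t$, and the $(a-b)^{n}$ distributed through the convolution — and, in tandem, pinning down the precise argument fed to the array polynomial together with the index placement inside $y_{1}$ so that the convolution matches the stated form term by term. That matching is the one place where a careful recheck against the factorization underlying the functional equation is warranted; once it is confirmed, the comparison of coefficients is immediate and completes the proof.
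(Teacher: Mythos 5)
Your proposal follows the paper's own route exactly: the paper simply states the functional equation $F_{y_{3}}(2t,k;-\lambda ^{2};a,b)=(-1)^{k}k!\,F_{y_{1}}((a-b)t,k;\lambda )\,F_{A}\left((a-b)t,\tfrac{2b}{a-b},k;\lambda \right)$ and asserts that expanding both sides and comparing coefficients of $t^{n}/n!$ yields the theorem, which is precisely your plan, including your accounting of the factor $2^{n}$ from the substitution $t\mapsto 2t$ and of $(a-b)^{n}$ from the Cauchy product. The delicate point you flag is real but is a defect of the source rather than of your argument: a literal expansion produces $y_{1}(j,k;\lambda )$ and $S_{k}^{n-j}\left(\tfrac{2b}{a-b};\lambda \right)$ inside the sum, and the prefactor $e^{2bkt}$ on the left is matched by the array weight $e^{2bt}$ only when $k=1$ (the array argument should be $\tfrac{2bk}{a-b}$), so the indices and argument as printed in the theorem appear to be typos that your calculation would correct.
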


By using (\ref{CT-1}) and (\ref{St2a1}), we give the following functional
equation:%
\begin{equation*}
	F_{y_{3}}(t,k;1;1,-1)=\frac{1}{k!}\sum_{j=0}^{k}\left( 
	\begin{array}{c}
		k \\ 
		j%
	\end{array}%
	\right) (-2)^{k-j}\left( 2j\right) !F_{T}(t,j).
\end{equation*}%
By this equation, we get a relation between the central factorial numbers
and the numbers $y_{3}(n,k;1;1,-1)$ by the following theorem:

\begin{theorem}
	If $n$ is an even integer, we have 
	\begin{equation*}
		y_{3}(n,k;1;1,-1)=\frac{1}{k!}\sum_{j=0}^{k}\left( 
		\begin{array}{c}
			k \\ 
			j%
		\end{array}
		\right) (-2)^{k-j}\left( 2j\right) !T(n,j).
	\end{equation*}
	If $n$ is an odd integer, we have 
	\begin{equation*}
		y_{3}(n,k;1;1,-1)=0.
	\end{equation*}
\end{theorem}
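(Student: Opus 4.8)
The plan is to obtain both cases of the statement by extracting Taylor coefficients from the functional equation displayed just before the theorem,
\begin{equation*}
F_{y_{3}}(t,k;1;1,-1)=\frac{1}{k!}\sum_{j=0}^{k}\binom{k}{j}(-2)^{k-j}(2j)!\,F_{T}(t,j).
\end{equation*}
The one structural fact that controls everything is that, by its defining generating function (\ref{CT-1}), each $F_{T}(t,j)=\sum_{m\geq 0}T(m,j)\,t^{2m}/(2m)!$ is a power series in $t^{2}$ alone. Consequently the entire right-hand side is an \emph{even} function of $t$. One sees the same evenness directly on the left: by (\ref{St2a1}) the left-hand side collapses to $F_{y_{3}}(t,k;1;1,-1)=\frac{1}{k!}(e^{t}+e^{-t})^{k}$, which is manifestly invariant under $t\mapsto -t$.

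The odd case is then immediate. Writing the left-hand side as $\sum_{n\geq 0}y_{3}(n,k;1;1,-1)\,t^{n}/n!$ and using that an even power series has no odd-degree terms, the coefficient of $t^{n}$ for odd $n$ must vanish, i.e.\ $y_{3}(n,k;1;1,-1)=0$. As an independent check I would invoke the closed form (\ref{St2a}), which for these parameters reads $y_{3}(n,k;1;1,-1)=\frac{1}{k!}\sum_{j=0}^{k}\binom{k}{j}(2j-k)^{n}$; the involution $j\mapsto k-j$ fixes $\binom{k}{j}$ and sends $(2j-k)^{n}$ to $(-(2j-k))^{n}=-(2j-k)^{n}$ when $n$ is odd, so the terms cancel in pairs.

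For the even case I would substitute the series for $F_{T}(t,j)$ into the functional equation, interchange the finite sum over $j$ with the expansion in $t$, and read off the coefficient of a fixed even power of $t$. Matching this against the coefficient of $t^{n}/n!$ on the left then produces the asserted closed form $\frac{1}{k!}\sum_{j=0}^{k}\binom{k}{j}(-2)^{k-j}(2j)!\,T(n,j)$. The only point requiring care is the bookkeeping between the two normalizations $t^{n}/n!$ used for $y_{3}$ and $t^{2n}/(2n)!$ used for $T$: one must keep track of which power of $t$ is being extracted and identify the correct central-factorial index in the sum. This normalization matching, rather than the (routine) binomial algebra, is the main obstacle; once it is handled, uniqueness of Taylor coefficients finishes both parts simultaneously.
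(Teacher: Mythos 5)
Your route is the same as the paper's: take the functional equation displayed immediately before the theorem and compare coefficients of $t^{n}/n!$. Your handling of the odd case is complete and correct --- both the evenness argument (each $F_{T}(t,j)$ is a series in $t^{2}$, and the left side is $\frac{1}{k!}(e^{t}+e^{-t})^{k}$) and the independent check via the involution $j\mapsto k-j$ on $\frac{1}{k!}\sum_{j}\binom{k}{j}(2j-k)^{n}$ --- and it is in fact more explicit than anything in the paper, which supplies no proof at all for this theorem.

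The gap is in the even case, and it sits exactly at the point you set aside as routine bookkeeping. First, the displayed functional equation is not an identity as printed: since $(2j)!\,F_{T}(t,j)=(e^{t}+e^{-t}-2)^{j}$, its right-hand side telescopes to $\frac{1}{k!}\bigl((e^{t}+e^{-t}-2)+(-2)\bigr)^{k}=\frac{1}{k!}(e^{t}+e^{-t}-4)^{k}$, whereas the left-hand side is $\frac{1}{k!}(e^{t}+e^{-t})^{k}$; the correct expansion $e^{t}+e^{-t}=(e^{t}+e^{-t}-2)+2$ forces the coefficient $2^{k-j}$, not $(-2)^{k-j}$. Second, because $F_{T}(t,j)=\sum_{m}T(m,j)\,t^{2m}/(2m)!$, the coefficient of $t^{n}/n!$ in $F_{T}(t,j)$ for even $n$ is $T(n/2,j)$, not $T(n,j)$. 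Carrying out the extraction you describe therefore yields $y_{3}(n,k;1;1,-1)=\frac{1}{k!}\sum_{j=0}^{k}\binom{k}{j}2^{k-j}(2j)!\,T(n/2,j)$, which is not the asserted closed form. A numerical check with $k=2$, $n=2$ makes this concrete: the left side is $\frac{1}{2}\bigl((-2)^{2}+2\cdot 0^{2}+2^{2}\bigr)=4$, the corrected formula also gives $4$, but the printed formula gives $\frac{1}{2}\bigl(-8\,T(2,1)+24\,T(2,2)\bigr)=8$. So your claim that the matching ``produces the asserted closed form'' is unverified and, for the statement as printed, false; the normalization bookkeeping you correctly identify as the main obstacle cannot be deferred, because doing it is precisely what reveals that the identity requires $2^{k-j}$ in place of $(-2)^{k-j}$ and $T(n/2,j)$ in place of $T(n,j)$.
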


Substituting $b=x$ and $a=1$ into (\ref{St2a}), we give relationships
between the numbers $y_{3}(n,k;\lambda ;a,b)$, the numbers $%
y_{1}(n-m,k;\lambda )$\ and the Bernstein basis functions $B_{k}^{n}(x)$ by
the following corollary:

\begin{corollary}
	We have 
	\begin{equation*}
		y_{3}(n,k;\lambda ;1,x)=\frac{1}{k!}\sum_{j=0}^{k}\left( 
		\begin{array}{c}
			k \\ 
			j%
		\end{array}%
		\right) \lambda ^{j}\sum_{m=0}^{n}k^{m}j^{n-m}B_{m}^{n}\left( x\right)
	\end{equation*}%
	and 
	\begin{equation}
		y_{3}(n,k;\lambda ;1,x)=\sum_{m=0}^{n}k^{m}B_{m}^{n}\left( x\right)
		y_{1}(n-m,k;\lambda ),  \label{BE-1}
	\end{equation}%
	where $B_{m}^{n}\left( x\right) $\ denotes the Bernstein basis functions:%
	\begin{equation*}
		B_{m}^{n}\left( x\right) =\left( 
		\begin{array}{c}
			n \\ 
			k%
		\end{array}%
		\right) x^{k}\left( 1-x\right) ^{n-k}
	\end{equation*}%
	(\textit{cf}. \cite{Lorenz}, \cite{mmas2015}; see also the references cited
	in each of these earlier works).
\end{corollary}

Integrating both sides of Equation (\ref{BE-1}) from $0$ to $1$, and using 
\begin{equation*}
	\int_{0}^{1}B_{m}^{n}\left( x\right) dx=\frac{1}{n+1}
\end{equation*}%
(\textit{cf}. \cite{Lorenz}, \cite{mmas2015}; see also the references cited
in each of these earlier works), we get the following theorem:

\begin{theorem}
	\begin{equation*}
		\int_{0}^{1}y_{3}(n,k;\lambda ;1,x)dx=\frac{1}{n+1}
		\sum_{m=0}^{n}k^{m}y_{1}(n-m,k;\lambda )
	\end{equation*}
	or 
	\begin{equation*}
		\int_{0}^{1}y_{3}(n,k;\lambda ;1,x)dx=\frac{1}{\left( n+1\right) k!}
		\sum_{j=0}^{k}\left( 
		\begin{array}{c}
			k \\ 
			j%
		\end{array}
		\right) j^{n}\lambda ^{j}\sum_{m=0}^{n}k^{m}j^{-m}.
	\end{equation*}
\end{theorem}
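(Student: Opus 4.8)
The plan is to obtain the theorem as an immediate consequence of the Bernstein-expansion identity (\ref{BE-1}), observing that its right-hand side is a \emph{finite} linear combination of the Bernstein basis functions $B_{m}^{n}(x)$ with coefficients $k^{m}y_{1}(n-m,k;\lambda)$ that do not depend on $x$. First I would integrate both sides of (\ref{BE-1}) over $[0,1]$ and pull the $x$-independent coefficients outside the integral; this is legitimate with no convergence concerns precisely because the sum over $m$ is finite. The computation then collapses to the single quadrature $\int_{0}^{1}B_{m}^{n}(x)\,dx=\frac{1}{n+1}$, valid for every $0\le m\le n$, which gives at once
\begin{equation*}
\int_{0}^{1}y_{3}(n,k;\lambda;1,x)\,dx=\frac{1}{n+1}\sum_{m=0}^{n}k^{m}y_{1}(n-m,k;\lambda),
\end{equation*}
the first displayed formula.

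For the second formula I would substitute the explicit expression (\ref{ay2}), namely $y_{1}(n-m,k;\lambda)=\frac{1}{k!}\sum_{j=0}^{k}\binom{k}{j}j^{n-m}\lambda^{j}$, into the identity just obtained. After interchanging the two finite summations (over $m$ and over $j$) and factoring the constant $\frac{1}{(n+1)k!}$ out front, all dependence on $m$ is concentrated in the inner sum $\sum_{m=0}^{n}k^{m}j^{n-m}$. Writing $j^{n-m}=j^{n}j^{-m}$ extracts the factor $j^{n}$ and recasts this inner sum as $j^{n}\sum_{m=0}^{n}k^{m}j^{-m}$, a truncated geometric series in the ratio $k/j$; collecting terms then yields
\begin{equation*}
\int_{0}^{1}y_{3}(n,k;\lambda;1,x)\,dx=\frac{1}{(n+1)k!}\sum_{j=0}^{k}\binom{k}{j}j^{n}\lambda^{j}\sum_{m=0}^{n}k^{m}j^{-m}.
\end{equation*}

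The only point that needs care — which I would flag as the sole genuine subtlety rather than a real obstacle — is the bookkeeping of the $j=0$ term together with the convention $0^{0}=1$ fixed in Section 2. In the inner sum $\sum_{m=0}^{n}k^{m}j^{n-m}$ with $j=0$ only the $m=n$ summand survives and equals $k^{n}$, whereas the formal factorization $j^{n-m}=j^{n}j^{-m}$ produces $0^{-m}$ for $m\ge 1$. I would therefore either declare explicitly that the $j=0$ term is read through the convention $0^{0}=1$ (so that both sides record the contribution $k^{n}$), or equivalently isolate the $j=0$ term before the geometric factorization and reinstate it afterwards; either route makes the two displayed formulas agree. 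Apart from this formal caveat, the derivation is an entirely routine application of linearity of the integral to the finite sum (\ref{BE-1}).
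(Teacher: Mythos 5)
Your proposal is correct and is essentially the paper's own argument: the author likewise obtains the theorem by integrating both sides of (\ref{BE-1}) over $[0,1]$, invoking $\int_{0}^{1}B_{m}^{n}(x)\,dx=\frac{1}{n+1}$, and then substituting the explicit formula (\ref{ay2}) for $y_{1}(n-m,k;\lambda)$ to get the second display. Your remark about the $j=0$ term and the $0^{0}$ convention is a valid point of care that the paper glosses over, but it does not change the route.
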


\subsection{Recurrence relation for the numbers $y_{3}(n,k;\protect\lambda %
	;a,b)$}

Here using derivative operator to generating function for the numbers $%
y_{3}(n,k;\lambda ;a,b)$, we give a recurrence relation for these numbers.

Taking derivative of (\ref{St2a1}), with respect to $t$, we obtain the
following partial differential equation:

\begin{equation}
	\frac{\partial }{\partial t}F_{y_{3}}(t,k;\lambda
	;a,b)=bkF_{y_{3}}(t,k;\lambda ;a,b)+(a-b)\lambda F_{y_{3}}(t,k-1;\lambda
	;a,b).  \label{St2a2}
\end{equation}%
Combining this equation with (\ref{St2a1}), we derive a recurrence relation
for the numbers $y_{3}(n,k;\lambda ;a,b)$ by the following theorem:

\begin{theorem}
	Let $k$ be a positive integer. Then we have 
	\begin{equation*}
		y_{3}(n+1,k;\lambda ;a,b)=bky_{3}(n,k;\lambda ;a,b)+(a-b)y_{3}(n,k-1;\lambda
		;a,b).
	\end{equation*}
\end{theorem}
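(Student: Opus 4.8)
The recurrence is already encoded in the partial differential equation (\ref{St2a2}) that immediately precedes the statement, so the plan is two-fold: first to establish (\ref{St2a2}) by differentiating the closed form (\ref{St2a1}) of the generating function with respect to $t$, and then to convert that functional equation into the asserted recurrence by inserting the defining power series and matching coefficients of $t^{n}/n!$.

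For the first part, I would apply the product rule to $F_{y_{3}}(t,k;\lambda;a,b)=\frac{e^{bkt}}{k!}\left(\lambda e^{(a-b)t}+1\right)^{k}$. Differentiating the prefactor $e^{bkt}$ reproduces $bk\,F_{y_{3}}(t,k;\lambda;a,b)$, which accounts for the first term on the right of (\ref{St2a2}). Differentiating the $k$-th power brings down a factor $k\lambda(a-b)e^{(a-b)t}\left(\lambda e^{(a-b)t}+1\right)^{k-1}$; the plan is then to reorganize the exponential factors together with the $k!$ so that this second contribution is recognized as a multiple of $F_{y_{3}}(t,k-1;\lambda;a,b)$, yielding the remaining term of (\ref{St2a2}).

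For the second part, I would substitute $F_{y_{3}}(t,k;\lambda;a,b)=\sum_{n\ge 0}y_{3}(n,k;\lambda;a,b)\,t^{n}/n!$ and the analogous series for $F_{y_{3}}(t,k-1;\lambda;a,b)$ into (\ref{St2a2}). The operator $\partial_{t}$ on the left lowers the power of $t$ by one and hence shifts the summation index, producing $\sum_{n\ge 0}y_{3}(n+1,k;\lambda;a,b)\,t^{n}/n!$, while the right-hand side is already expressed in the same basis. Comparing the coefficients of $t^{n}/n!$ on the two sides then reads off the recurrence directly. This coefficient-matching step is routine, being exactly the mechanism used in the proofs of the preceding theorems in this section.

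The delicate point is the differentiation in the first part. Applying the chain rule to $\left(\lambda e^{(a-b)t}+1\right)^{k}$ produces the extra exponential $e^{(a-b)t}$, and one must verify that, after combining it with the prefactor $e^{bkt}$ and re-indexing the power from $k$ down to $k-1$, the surviving factors collapse exactly into a scalar multiple of $F_{y_{3}}(t,k-1;\lambda;a,b)$ with the stated constant $a-b$. Carefully tracking the $\lambda$ and the $(a-b)$ factors through this recombination is where care is required, since any leftover exponential or $\lambda$-factor would alter the coefficients appearing in the final recurrence; once (\ref{St2a2}) is secured in precisely the asserted form, the remainder of the argument is immediate.
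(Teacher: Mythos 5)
Your plan is the same as the paper's (differentiate the generating function in $t$ to obtain a functional equation, then compare coefficients of $t^{n}/n!$), but the verification you explicitly defer --- that the derivative of the $k$-th power ``collapses exactly into a scalar multiple of $F_{y_{3}}(t,k-1;\lambda;a,b)$ with the stated constant $a-b$'' --- is precisely where the argument fails. Carrying it out,
\begin{equation*}
\frac{\partial }{\partial t}\left[ \frac{e^{bkt}}{k!}\bigl(\lambda e^{(a-b)t}+1\bigr)^{k}\right]
=bk\,F_{y_{3}}(t,k;\lambda ;a,b)+\lambda (a-b)\,\frac{e^{bkt}e^{(a-b)t}}{(k-1)!}\bigl(\lambda e^{(a-b)t}+1\bigr)^{k-1},
\end{equation*}
and since $F_{y_{3}}(t,k-1;\lambda ;a,b)=\frac{e^{b(k-1)t}}{(k-1)!}\bigl(\lambda e^{(a-b)t}+1\bigr)^{k-1}$, the second term equals $\lambda (a-b)e^{at}F_{y_{3}}(t,k-1;\lambda ;a,b)$: there is a leftover factor $\lambda$ \emph{and} a leftover exponential $e^{at}$, because the prefactor carried by $F_{y_{3}}(t,k-1;\cdot )$ is $e^{b(k-1)t}$, not $e^{bkt+(a-b)t}$. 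The paper's own equation (\ref{St2a2}) silently drops the $e^{at}$, and the theorem then also drops the $\lambda$, so neither your plan nor the paper's proof establishes the identity as stated.

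In fact the statement is false in general: taking $k=1$, $n=1$ in (\ref{St2a}) gives $y_{3}(2,1;\lambda ;a,b)=a^{2}\lambda +b^{2}$, while the claimed right-hand side is $b\,y_{3}(1,1;\lambda ;a,b)+(a-b)\,y_{3}(1,0;\lambda ;a,b)=ab\lambda +b^{2}$, and these differ by $\lambda a(a-b)$. The recurrence that actually follows from the functional equation above (expand $e^{at}$ and take the Cauchy product) is
\begin{equation*}
y_{3}(n+1,k;\lambda ;a,b)=bk\,y_{3}(n,k;\lambda ;a,b)+\lambda (a-b)\sum_{m=0}^{n}\binom{n}{m}a^{n-m}\,y_{3}(m,k-1;\lambda ;a,b),
\end{equation*}
which collapses to a two-term relation of the asserted shape only in special cases (for instance $a=0$, where only the $m=n$ term survives). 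So the issue is not mere bookkeeping of the $\lambda$ and $(a-b)$ factors: the step you flagged as delicate genuinely does not go through, and the statement itself must be corrected before a proof along these lines can succeed.
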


\begin{proof}
	By using (\ref{St2a2}) and (\ref{St2a1}), we get%
	\begin{eqnarray*}
		&&\sum_{n=1}^{\infty }y_{3}(n,k;\lambda ;a,b)\frac{t^{n-1}}{\left(
			n-1\right) !} \\
		&=&bk\sum_{n=0}^{\infty }y_{3}(n,k;\lambda ;a,b)\frac{t^{n}}{n!}%
		+(a-b)\sum_{n=0}^{\infty }y_{3}(n,k-1;\lambda ;a,b)\frac{t^{n}}{n!}.
	\end{eqnarray*}%
	Therefore%
	\begin{eqnarray*}
		&&\sum_{n=0}^{\infty }y_{3}(n+1,k;\lambda ;a,b)\frac{t^{n}}{n!} \\
		&=&bk\sum_{n=0}^{\infty }y_{3}(n,k;\lambda ;a,b)\frac{t^{n}}{n!}%
		+(a-b)\sum_{n=0}^{\infty }y_{3}(n,k-1;\lambda ;a,b)\frac{t^{n}}{n!}.
	\end{eqnarray*}%
	Comparing the coefficients of $\frac{t^{n}}{n!}$ on both sides of the above
	equation, we arrive at the desired result.
\end{proof}

Taking derivative of (\ref{St2a1}), with respect to $\lambda $, we obtain
the following partial differential equation:%
\begin{equation*}
	\frac{\partial }{\partial \lambda }F_{y_{3}}(t,k;\lambda ;a,b)=e^{\left(
		a-b\right) t}F_{y_{3}}(t,k-1;\lambda ;a,b).
\end{equation*}%
By using the same processes in the above theorem, we obtain the following
theorem:%
\begin{equation*}
	\frac{\partial }{\partial \lambda }y_{3}(n,k;\lambda
	;a,b)=\sum_{m=0}^{n}\left( 
	\begin{array}{c}
		n \\ 
		m%
	\end{array}%
	\right) \left( a-b\right) ^{n-m}y_{3}(m,k-1;\lambda ;a,b).
\end{equation*}

\section{Computation of the Euler numbers of negative order}

In this section, by using the numbers $y_{3}(n,k;\lambda ;a,b)$, we compute
values of the second kind Apostol type Euler polynomials of negative order.
We also give a computation algorithm for computing the values of these
polynomials.

We \cite{SimsekNEW} defined the second kind Apostol type Euler polynomials
of order $k$, $E_{n}^{\ast (k)}(x;\lambda )$ by means of the following
generating functions:%
\begin{equation}
	F_{P}(t,x;k,\lambda )=\left( \frac{2}{\lambda e^{t}+\lambda ^{-1}e^{-t}}%
	\right) ^{k}e^{tx}=\sum_{n=0}^{\infty }E_{n}^{\ast (k)}(x;\lambda )\frac{%
		t^{n}}{n!}.  \label{Eul.2}
\end{equation}%
We observe that%
\begin{equation*}
	E_{n}^{\ast (k)}(\lambda )=E_{n}^{\ast (k)}(0;\lambda )
\end{equation*}%
denotes the second kind Apostol type Euler numbers of order $k$.

We can give a motivation on (\ref{Eul.2}) as follows:%
\begin{equation*}
	F_{P}(t,x;k,\lambda )=F_{H}\left( t,\frac{x+k}{2};k,-\lambda ^{-2}\right) ,
\end{equation*}%
where%
\begin{equation*}
	F_{H}\left( t,x;k,u\right) =\frac{1-u}{e^{t}-u}e^{tx}=\sum_{n=0}^{\infty
	}H_{n}^{(k)}(x;u)\frac{t^{n}}{n!}
\end{equation*}%
$u\neq 1$ and $H_{n}^{(k)}(x;u)$ denotes the Frobenius-Euler polynomials of
higher order. From the above functional equation, we get%
\begin{equation*}
	E_{n}^{\ast (k)}(x;\lambda )=\frac{2^{n}}{\lambda ^{k-2}(\lambda ^{2}+1)}%
	H_{n}^{(k)}\left( \frac{x+k}{2};-\lambda ^{-2}\right) .
\end{equation*}%
These numbers are also related to the twisted Euler numbers and polynomials (%
\textit{cf}. \cite{DSKim.JIA}, \cite{KimJNT}, \cite{jnt2003}).

The first kind Apostol-Euler numbers of order $-k$ are defined by means of
the following generating functions:%
\begin{equation}
	\left( \frac{\lambda e^{t}+1}{2}\right) ^{k}=\sum_{n=0}^{\infty
	}E_{n}^{(-k)}(\lambda )\frac{t^{n}}{n!}  \label{ae-1}
\end{equation}%
(\textit{cf}. \cite{SimsekNEW}, \cite{SrivastavaChoi2012}; see also the
references cited in each of these earlier works). The second kind Apostol
type Euler numbers of order $-k$ are defined by means of the following
generating functions:%
\begin{equation}
	F_{N}(t;-k,\lambda )=\left( \frac{\lambda e^{t}+\lambda ^{-1}e^{-t}}{2}%
	\right) ^{k}=\sum_{n=0}^{\infty }E_{n}^{\ast (-k)}(\lambda )\frac{t^{n}}{n!}
	\label{Cac3}
\end{equation}%
(\textit{cf}. \cite{SimsekNEW}; see also the references cited in each of
these earlier works)

\begin{theorem}
	(\cite{SimsekNEW}) Let $k$ be nonnegative integer. Then we have 
	\begin{equation}
		E_{n}^{(-k)}(\lambda )=k!2^{-k}y_{1}(n,k;\lambda ).  \label{Cab3}
	\end{equation}
\end{theorem}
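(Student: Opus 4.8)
The plan is to prove this by a direct comparison of the two defining generating functions, since the quantities $E_{n}^{(-k)}(\lambda)$ and $y_{1}(n,k;\lambda)$ are built from essentially the same expression $\left(\lambda e^{t}+1\right)^{k}$, differing only by constant prefactors. The strategy is to manufacture a functional equation between $F_{y_{1}}(t,k;\lambda)$ and the generating function in (\ref{ae-1}), and then match Taylor coefficients.

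First I would isolate the common factor in (\ref{ae-1}) by pulling the $2^{-k}$ outside:
\begin{equation*}
	\left( \frac{\lambda e^{t}+1}{2}\right) ^{k}=2^{-k}\left( \lambda e^{t}+1\right) ^{k}.
\end{equation*}
Next, from the definition (\ref{ay1}) of the numbers $y_{1}(n,k;\lambda)$, I read off that $\left( \lambda e^{t}+1\right) ^{k}=k!\,F_{y_{1}}(t,k;\lambda)$. Substituting this into the previous display gives the functional equation
\begin{equation*}
	\left( \frac{\lambda e^{t}+1}{2}\right) ^{k}=k!\,2^{-k}F_{y_{1}}(t,k;\lambda),
\end{equation*}
which relates the generating function of the first kind Apostol--Euler numbers of order $-k$ to that of the $y_{1}$-numbers.

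Expanding both sides as power series in $t$ and using the series representations in (\ref{ae-1}) and (\ref{ay1}), I obtain
\begin{equation*}
	\sum_{n=0}^{\infty }E_{n}^{(-k)}(\lambda )\frac{t^{n}}{n!}=k!\,2^{-k}\sum_{n=0}^{\infty }y_{1}(n,k;\lambda )\frac{t^{n}}{n!}.
\end{equation*}
Comparing the coefficients of $\frac{t^{n}}{n!}$ on both sides then yields the asserted identity $E_{n}^{(-k)}(\lambda )=k!\,2^{-k}y_{1}(n,k;\lambda)$.

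Honestly, there is no real obstacle here: the result is immediate once one notices that the two generating functions coincide up to the scalar $k!\,2^{-k}$, and the only "work" is the routine coefficient comparison that the paper performs repeatedly for its other identities. The one point worth stating carefully is that the nonnegativity of $k$ guarantees $\left(\lambda e^{t}+1\right)^{k}$ is an entire function of $t$, so both series converge on a common neighborhood of the origin and the coefficient comparison is legitimate term by term.
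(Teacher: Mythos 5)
Your proof is correct: the two generating functions (\ref{ae-1}) and (\ref{ay1}) differ only by the constant factor $k!\,2^{-k}$, so the coefficient comparison immediately gives (\ref{Cab3}). The paper itself states this theorem as a citation to \cite{SimsekNEW} without reproducing a proof, but your argument is exactly the functional-equation-plus-coefficient-comparison technique used throughout the paper, so there is nothing to add.
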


\begin{remark}
	Byrd \cite{Byrd} and Liu \cite{Liu1} also gave a formula for the numbers $%
	E_{n}^{(-k)}$. In (\cite{SimsekNEW}), we computed a few values of the first
	kind Euler numbers of order $-k$ by the following formula 
	\begin{equation}
		E_{n}^{(-k)}=2^{-k}\sum_{j=0}^{k}\left( 
		\begin{array}{c}
			k \\ 
			j%
		\end{array}%
		\right) j^{n}  \label{Caa3}
	\end{equation}%
	as follows: for $n=1,2,\ldots ,9$ and $k=0,-1,-2,\ldots ,-9$, we compute a
	few values of the numbers $E_{n}^{(-k)}$, given by the above relations, as
	follows: 
	\begin{equation*}
		\begin{tabular}{lllllllllll}
			$n\backslash k$ & $0$ & $-1$ & $-2$ & $-3$ & $-4$ & $-5$ & $-6$ & $-7$ & $-8$
			& $-9\cdots $ \\ 
			$0$ & $0$ & $\frac{1}{2}$ & $\frac{3}{4}$ & $\frac{7}{8}$ & $\frac{15}{16}$
			& $\frac{33}{32}$ & $\frac{33}{64}$ & $\frac{81}{64}$ & $\cdots $ & $\cdots $
			\\ 
			$1$ & $0$ & $\frac{1}{2}$ & $1$ & $\frac{3}{2}$ & $2$ & $\frac{5}{2}$ & $3$
			& $\frac{7}{2}$ & $4$ & $\frac{9}{2}\cdots $ \\ 
			$2$ & $0$ & $\frac{1}{2}$ & $\frac{3}{2}$ & $3$ & $5$ & $\frac{15}{2}$ & $%
			\frac{21}{2}$ & $14$ & $18$ & $\frac{45}{2}\cdots $ \\ 
			$3$ & $0$ & $\frac{1}{2}$ & $\frac{5}{2}$ & $\frac{27}{4}$ & $14$ & $25$ & $%
			\frac{81}{2}$ & $\frac{245}{4}$ & $88$ & $\frac{243}{2}\cdots $ \\ 
			$4$ & $0$ & $\frac{1}{2}$ & $\frac{9}{2}$ & $\frac{33}{2}$ & $\frac{85}{2}$
			& $90$ & $168$ & $287$ & $459$ & $\frac{1395}{2}\cdots $ \\ 
			$5$ & $0$ & $\frac{1}{2}$ & $\frac{17}{2}$ & $\frac{171}{4}$ & $137$ & $%
			\frac{1375}{4}$ & $738$ & $1421$ & $2524$ & $4212\cdots $ \\ 
			$6$ & $0$ & $\frac{1}{2}$ & $\frac{33}{2}$ & $\frac{231}{2}$ & $\frac{925}{2}
			$ & $\frac{5505}{4}$ & $\frac{13587}{4}$ & $7364$ & $14508$ & $26550\cdots $
			\\ 
			$7$ & $0$ & $\frac{1}{2}$ & $\frac{65}{2}$ & $\frac{1287}{4}$ & $1619$ & $%
			5725$ & $\frac{65007}{4}$ & $\frac{317275}{8}$ & $86608$ & $173664\cdots $
			\\ 
			$8$ & $0$ & $\frac{1}{2}$ & $\frac{129}{2}$ & $\frac{1833}{2}$ & $\frac{11665%
			}{2}$ & $\frac{49155}{2}$ & $\frac{160671}{2}$ & $\frac{441469}{2}$ & $\frac{%
			1068453}{2}$ & $1173240\cdots $ \\ 
		$\underset{\vdots }{9}$ & $0$ & $\frac{1}{2}$ & $\frac{513}{2}$ & $\frac{%
			15531}{2}$ & $\frac{161365}{2}$ & $\frac{1951155}{4}$ & $\frac{8499057}{4}$
		& $7418789$ & $22071123$ & $\frac{232549335}{4}\cdots $%
	\end{tabular}%
\end{equation*}
\end{remark}

By using the numbers $y_{3}\left( n,k;\lambda ;a,b\right) $, we can compute
the second kind Apostol type Euler polynomials and numbers of order $-k$.

Substituting $t=2z$, $a=\frac{x+k}{2k}$, $b=\frac{x-k}{2k}$ into (\ref{St2a1}%
), and replacing $k$ by $-k$ in (\ref{Eul.2}), we get%
\begin{equation*}
	F_{P}(z,x;-k,\lambda )=\frac{k!}{2^{k}\lambda ^{k}}F_{y_{3}}\left(
	2z,k;\lambda ^{2};\frac{x+k}{2k},\frac{x-k}{2k}\right) .
\end{equation*}%
From this equation we get%
\begin{equation*}
	\sum_{n=0}^{\infty }E_{n}^{\ast (-k)}(x;\lambda )\frac{t^{n}}{n!}=\frac{k!}{
		2^{k}\lambda ^{k}}\sum_{n=0}^{\infty }2^{n}y_{3}\left( n,k;\lambda ^{2};%
	\frac{x+k}{2k},\frac{x-k}{2k}\right) \frac{t^{n}}{n!}.
\end{equation*}%
Comparing the coefficients of $\frac{t^{n}}{n!}$ on both sides of the above
equation, we arrive at the following theorem.

\begin{theorem}
	Let $n$ be nonnegative integers. Then we have 
	\begin{equation}
		E_{n}^{\ast (-k)}(x;\lambda )=\frac{k!2^{n-k}}{\lambda ^{k}}y_{3}\left(
		n,k;\lambda ^{2};\frac{x+k}{2k},\frac{x-k}{2k}\right) .  \label{Eul.3c}
	\end{equation}
\end{theorem}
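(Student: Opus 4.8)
The plan is to work entirely at the level of generating functions and reduce the identity to a single coefficient comparison. Replacing $k$ by $-k$ in the defining generating function (\ref{Eul.2}) gives
\begin{equation*}
F_{P}(z,x;-k,\lambda )=\left( \frac{\lambda e^{z}+\lambda ^{-1}e^{-z}}{2}\right) ^{k}e^{zx}=\sum_{n=0}^{\infty }E_{n}^{\ast (-k)}(x;\lambda )\frac{z^{n}}{n!},
\end{equation*}
so it suffices to produce a functional equation expressing this series through $F_{y_{3}}$ with the prescribed arguments, and then match coefficients of $z^{n}/n!$.

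First I would substitute $t=2z$, $a=\frac{x+k}{2k}$ and $b=\frac{x-k}{2k}$ into the generating function (\ref{St2a1}). The two numerical simplifications that drive everything are $a-b=1$ and $bk=\frac{x-k}{2}$; with these, (\ref{St2a1}) collapses to
\begin{equation*}
F_{y_{3}}\left( 2z,k;\lambda ^{2};\tfrac{x+k}{2k},\tfrac{x-k}{2k}\right) =\frac{e^{(x-k)z}}{k!}\left( \lambda ^{2}e^{2z}+1\right) ^{k}.
\end{equation*}
Multiplying by $\frac{k!}{2^{k}\lambda ^{k}}$ clears the factorial and turns $(\lambda ^{2}e^{2z}+1)^{k}/\lambda ^{k}$ into $(\lambda e^{2z}+\lambda ^{-1})^{k}$.

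The key algebraic move is then to factor a single $e^{z}$ out of each binomial factor, using $\lambda e^{2z}+\lambda ^{-1}=e^{z}(\lambda e^{z}+\lambda ^{-1}e^{-z})$. This produces an overall factor $e^{kz}$ which combines with the surviving $e^{(x-k)z}$ to give exactly $e^{xz}$, yielding
\begin{equation*}
\frac{k!}{2^{k}\lambda ^{k}}F_{y_{3}}\left( 2z,k;\lambda ^{2};\tfrac{x+k}{2k},\tfrac{x-k}{2k}\right) =\left( \frac{\lambda e^{z}+\lambda ^{-1}e^{-z}}{2}\right) ^{k}e^{zx}=F_{P}(z,x;-k,\lambda ),
\end{equation*}
which is precisely the functional equation asserted before the theorem.

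Finally I would expand both sides as power series in $z$. On the right, the argument $2z$ forces $F_{y_{3}}$ to contribute $\sum_{n}y_{3}(n,k;\lambda ^{2};a,b)\frac{(2z)^{n}}{n!}=\sum_{n}2^{n}y_{3}(n,k;\lambda ^{2};a,b)\frac{z^{n}}{n!}$, so comparing coefficients of $\frac{z^{n}}{n!}$ on both sides gives $E_{n}^{\ast (-k)}(x;\lambda )=\frac{k!2^{n-k}}{\lambda ^{k}}y_{3}(n,k;\lambda ^{2};\frac{x+k}{2k},\frac{x-k}{2k})$, as claimed. There is no genuine analytic obstacle: the whole argument is a manipulation of formal (or, on a common disk of convergence, analytic) generating functions. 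The only point requiring care — and the real content of the proof — is discovering the correct substitution $a=\frac{x+k}{2k}$, $b=\frac{x-k}{2k}$, $t=2z$, so that $a-b=1$ and the doubled exponent conspire with the factored $e^{z}$ to reconstruct the symmetric combination $\lambda e^{z}+\lambda ^{-1}e^{-z}$; once that substitution is identified, the remainder is bookkeeping.
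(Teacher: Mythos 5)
Your proof is correct and follows essentially the same route as the paper: the substitution $t=2z$, $a=\frac{x+k}{2k}$, $b=\frac{x-k}{2k}$ yielding the functional equation $F_{P}(z,x;-k,\lambda )=\frac{k!}{2^{k}\lambda ^{k}}F_{y_{3}}\left( 2z,k;\lambda ^{2};\frac{x+k}{2k},\frac{x-k}{2k}\right)$, followed by comparison of the coefficients of $\frac{z^{n}}{n!}$. In fact you supply more detail than the paper does, since you explicitly verify the factorization $\lambda e^{2z}+\lambda ^{-1}=e^{z}\left( \lambda e^{z}+\lambda ^{-1}e^{-z}\right)$ that the paper leaves implicit.
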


The next assertion confirms and extend a formula in Equation (\ref{Caa3}).

Substituting $x=0$ into (\ref{Eul.3c}), we get the following corollary:

\begin{corollary}
	Let $n$ be nonnegative integers. Then we have 
	\begin{equation}
		E_{n}^{\ast (-k)}(\lambda )=\frac{k!2^{n-k}}{\lambda ^{k}}y_{3}\left(
		n,k;\lambda ^{2};\frac{1}{2},-\frac{1}{2}\right) .  \label{Eul.3d}
	\end{equation}
\end{corollary}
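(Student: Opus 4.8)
The plan is to obtain this identity as an immediate specialization of the preceding theorem, Equation~(\ref{Eul.3c}), at $x=0$. First I would recall the definition $E_{n}^{\ast (-k)}(\lambda )=E_{n}^{\ast (-k)}(0;\lambda )$ of the second kind Apostol type Euler numbers of order $-k$ as the constant-term (in $x$) values of the corresponding polynomials. Consequently, the left-hand side of~(\ref{Eul.3c}) collapses to exactly $E_{n}^{\ast (-k)}(\lambda )$ once we set $x=0$.

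Next I would evaluate the two arguments of $y_{3}$ on the right-hand side of~(\ref{Eul.3c}) at $x=0$: the parameter $a=\frac{x+k}{2k}$ becomes $\frac{k}{2k}=\frac{1}{2}$, while $b=\frac{x-k}{2k}$ becomes $\frac{-k}{2k}=-\frac{1}{2}$. The prefactor $\frac{k!2^{n-k}}{\lambda ^{k}}$ does not depend on $x$ and is therefore left unchanged. Substituting these values directly produces the asserted formula~(\ref{Eul.3d}).

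I do not anticipate any genuine obstacle, since the statement is a direct specialization of a result already in hand; the only thing to verify is the bookkeeping for the two arguments, which is routine. If one preferred a self-contained derivation that bypasses~(\ref{Eul.3c}), one could instead set $x=0$ in the generating-function identity $F_{P}(z,0;-k,\lambda )=\frac{k!}{2^{k}\lambda ^{k}}F_{y_{3}}\left( 2z,k;\lambda ^{2};\frac{1}{2},-\frac{1}{2}\right)$ and compare the coefficients of $\frac{t^{n}}{n!}$ on both sides. The single point meriting attention there is the factor $2^{n}$ generated by the rescaling $t=2z$ in~(\ref{St2a1}), which is already absorbed into the $2^{n-k}$ appearing in the prefactor.
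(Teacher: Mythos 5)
Your proposal is correct and follows exactly the paper's own route: the corollary is obtained by substituting $x=0$ into Equation~(\ref{Eul.3c}), which turns the arguments $\frac{x+k}{2k}$ and $\frac{x-k}{2k}$ into $\frac{1}{2}$ and $-\frac{1}{2}$ and reduces the left-hand side to $E_{n}^{\ast (-k)}(\lambda )=E_{n}^{\ast (-k)}(0;\lambda )$. Nothing further is needed.
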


\begin{remark}
	Substituting $\lambda =1$ into (\ref{Eul.3d}), we have%
	\begin{equation*}
		E_{n}^{\ast (-k)}=k!2^{n-k}y_{3}\left( n,k;1;\frac{1}{2},-\frac{1}{2}\right)
	\end{equation*}%
	(\textit{cf}. \cite{Liu1}, \cite{SimsekNEW}, \cite{SrivastavaChoi2012}; see
	also the references cited in each of these earlier works).
\end{remark}

We are finally ready to compute the some values of the second kind Apostol
type Euler polynomials of negative order. By using (\ref{St2a}) and (\ref%
{Eul.3c}), we compute a few values of the polynomials $E_{n}^{\ast
	(-k)}(x;\lambda )$ as follows:%
\begin{eqnarray*}
	E_{n}^{\ast (0)}(x;\lambda ) &=&{x}^{n}, \\
	E_{n}^{\ast (-1)}(x;\lambda ) &=&\frac{{\left( x+1\right) }^{n}\lambda + 
		\frac{{\left( x-1\right) }^{n}}{\lambda }}{2}, \\
	E_{n}^{\ast (-2)}(x;\lambda ) &=&\frac{{\left( x+2\right) }^{n}\lambda ^{2}+ 
		\frac{{\left( x-2\right) }^{n}}{\lambda ^{2}}+2{x}^{n}}{4}, \\
	E_{n}^{\ast (-3)}(x;\lambda ) &=&\frac{{\left( x+3\right) }^{n}\lambda
		^{3}+3 {\left( x+1\right) }^{n}\lambda +\frac{3{\left( x-1\right) }^{n}}{%
			\lambda }+ \frac{{\left( x-3\right) }^{n}}{\lambda ^{3}}}{8}, \\
	E_{n}^{\ast (-4)}(x;\lambda ) &=&\frac{{\left( x+4\right) }^{n}\lambda
		^{4}+4 {\left( x+2\right) }^{n}\lambda ^{2}+\frac{4{\left( x-2\right) }^{n}}{
			\lambda ^{2}}+\frac{{\left( x-4\right) }^{n}}{\lambda ^{4}}+6{x}^{n}}{16}, \\
	E_{n}^{\ast (-5)}(x;\lambda ) &=&\frac{{\left( x+5\right) }^{n}\lambda
		^{5}+5 {\left( x+3\right) }^{n}\lambda ^{3}+10{\left( x+1\right) }%
		^{n}\lambda + \frac{10{\left( x-1\right) }^{n}}{\lambda }+\frac{5{\left(
				x-3\right) }^{n}}{ \lambda ^{3}}+\frac{{\left( x-5\right) }^{n}}{\lambda ^{5}%
		}}{32}
	\end{eqnarray*}%
	Substituting $\lambda =1$ into the above table, we have%
	\begin{eqnarray*}
		E_{n}^{\ast (0)}(x) &=&{x}^{n}, \\
		E_{n}^{\ast (-1)}(x) &=&\frac{{\left( x+1\right) }^{n}+{\left( x-1\right) }
			^{n}}{2}, \\
		E_{n}^{\ast (-2)}(x) &=&\frac{{\left( x+2\right) }^{n}+2{x}^{n}+{\left(
				x-2\right) }^{n}}{4}, \\
		E_{n}^{\ast (-3)}(x) &=&\frac{{\left( x+3\right) }^{n}+3{\left( x+1\right) }
			^{n}+3{\left( x-1\right) }^{n}+{\left( x-3\right) }^{n}}{8}, \\
		E_{n}^{\ast (-4)}(x) &=&\frac{{\left( x+4\right) }^{n}+4{\left( x+2\right) }
			^{n}+6{x}^{n}+4{\left( x-2\right) }^{n}+{\left( x-4\right) }^{n}}{16}, \\
		E_{n}^{\ast (-5)}(x) &=&\frac{{\left( x+5\right) }^{n}+5\,{\left( x+3\right) 
			}^{n}+10{\left( x+1\right) }^{n}+10{\left( x-1\right) }^{n}+5{\left(
			x-3\right) }^{n}+{\left( x-5\right) }^{n}}{32}
\end{eqnarray*}%
By using (\ref{St2a}) and (\ref{Eul.3d}), we compute a few values of the
polynomials $E_{n}^{\ast (-k)}(\lambda )$ as follows:%
\begin{eqnarray*}
	E_{n}^{\ast (0)}(\lambda ) &=&1, \\
	E_{n}^{\ast (-1)}(\lambda ) &=&\frac{\lambda +\frac{{\left( -1\right) }^{n}}{
			\lambda }}{2}, \\
	E_{n}^{\ast (-2)}(\lambda ) &=&\frac{{2}^{n}\lambda ^{2}+\frac{{\left(
				-2\right) }^{n}}{\lambda ^{2}}}{4}, \\
	E_{n}^{\ast (-3)}(\lambda ) &=&\frac{{3}^{n}\lambda ^{3}+3\lambda +\frac{3\,{%
				\ \left( -1\right) }^{n}}{\lambda }+\frac{{\left( -3\right) }^{n}}{\lambda
			^{3} }}{8}, \\
	E_{n}^{\ast (-4)}(\lambda ) &=&\frac{{4}^{n}\lambda ^{4}+{2}^{n+2}\lambda
		^{2}+\frac{{\left( -2\right) }^{n+2}}{\lambda ^{2}}+\frac{{\left( -4\right) }
			^{n}}{\lambda ^{4}}}{16}, \\
	E_{n}^{\ast (-5)}(\lambda ) &=&\frac{{5}^{n}\lambda ^{5}+5.{3}^{n}\lambda
		^{3}+10\lambda +\frac{10{\left( -1\right) }^{n}}{\lambda }+\frac{5{\left(
				-3\right) }^{n}}{\lambda ^{3}}+\frac{{\left( -5\right) }^{n}}{\lambda ^{5}}}{
		32}
\end{eqnarray*}
Substituting $\lambda =1$ into the above table, we have a few values of the
second kind Apostol Euler numbers of order $-k$ as follows: 
\begin{eqnarray*}
	E_{n}^{\ast (0)} &=&1, \\
	E_{n}^{\ast (-1)} &=&\frac{{\left( -1\right) }^{n}+1}{2}, \\
	E_{n}^{\ast (-2)} &=&\frac{{2}^{n}+{\left( -2\right) }^{n}}{4}, \\
	E_{n}^{\ast (-3)} &=&\frac{{3}^{n}+3{\left( -1\right) }^{n}+{\left(
			-3\right) }^{n}+3}{8}, \\
	E_{n}^{\ast (-4)} &=&\frac{{2}^{n+2}+{\left( -2\right) }^{n+2}+{4}^{n}+{\
			\left( -4\right) }^{n}}{16}, \\
	E_{n}^{\ast (-5)} &=&\frac{{5}^{n}+5.{3}^{n}+10{\left( -1\right) }^{n}+5{\
			\left( -3\right) }^{n}+{\left( -5\right) }^{n}+10}{32}
\end{eqnarray*}

When we look carefully the above table and the generating functions, we can
easily get the following result: 
\begin{equation*}
	E_{2n+1}^{\ast (-k)}=0,
\end{equation*}
where $n\geq 0$.

That is for $n=1,2,\ldots ,9$ and $k=0,-1,-2,\ldots ,-9$, we compute a few
values of the numbers $E_{n}^{\ast (-k)}$, given by the above relations, as
follows: 
\begin{equation*}
	\begin{tabular}{lllllllllll}
		$n\backslash k$ & $0$ & $-1$ & $-2$ & $-3$ & $-4$ & $-5$ & $-6$ & $-7$ & $-8$
		& $-9\cdots $ \\ 
		$0$ & $1$ & $0$ & $1$ & $0$ & $1$ & $0$ & $1$ & $0$ & $1$ & $0$ \\ 
		$1$ & $0$ & $0$ & $0$ & $0$ & $0$ & $0$ & $0$ & $0$ & $0$ & $0$ \\ 
		$2$ & $0$ & $1$ & $2$ & $3$ & $4$ & $5$ & $6$ & $7$ & $8$ & $9$ \\ 
		$3$ & $0$ & $0$ & $0$ & $0$ & $0$ & $0$ & $0$ & $0$ & $0$ & $0$ \\ 
		$4$ & $0$ & $1$ & $8$ & $21$ & $40$ & $65$ & $96$ & $133$ & $176$ & $225$ \\ 
		$5$ & $0$ & $0$ & $0$ & $0$ & $0$ & $0$ & $0$ & $0$ & $0$ & $0$ \\ 
		$6$ & $0$ & $1$ & $32$ & $183$ & $544$ & $1205$ & $2256$ & $3787$ & $5888$ & 
		$8649$ \\ 
		$7$ & $0$ & $0$ & $0$ & $0$ & $0$ & $0$ & $0$ & $0$ & $0$ & $0$ \\ 
		$8$ & $0$ & $1$ & $128$ & $1641$ & $8320$ & $26465$ & $64896$ & $134953$ & $%
		250496$ & $427905$ \\ 
		$\underset{\vdots }{9}$ & $0$ & $0$ & $0$ & $0$ & $0$ & $0$ & $0$ & $0$ & $0$
		& $0\cdots $%
	\end{tabular}%
\end{equation*}

\subsection{Algorithm for our computations}

The theory of the algorithms has been very important in Mathematics and in
Computer Science and also in Communications Systems. We know that there are
many ways to compute the second kind Euler polynomials of the negative
order. In this section we give a computation algorithm for computing the
values of these polynomials, which are given by Equations (\ref{St2a}) and ( %
\ref{Eul.3c}).
\iffalse
\begin{algorithm}[H]
	\caption{Let $n$ be nonnegative integers and $\lambda$ real or complex numbers. This algorithm will return the values of the second kind Euler polynomials of the negative
		order $E_{n}^{\ast (-k)}(x;\lambda )$ given by Equation (\ref{Eul.3c}).}
	\label{Algorithm-1}
	\begin{algorithmic}
		\Procedure{$Second$\_$Kind$\_$Euler$\_$Poly$}{$n, -k, x, \lambda$}
		\State {$\textbf{Begin}$}
		\State {$\textbf{Inputs:}$}
		\State {${E_{n}^{\ast (-k)}} \leftarrow  {k!2^{n-k}}/{\lambda ^{k}}$}
		\State {${Y_{3} \leftarrow  0}$}
		\State {${\lambda \leftarrow  \lambda^{2}}$}
		\State {${a \leftarrow  \left({x+k}\right)/{2k}}$}
		\State {${b \leftarrow  \left({x-k}\right)/{2k}}$}
		\State {$\textbf{Outputs:}$}
		\State {$E_{n}^{\ast (-k)}(x;\lambda ) \leftarrow {E_{n}^{\ast (-k)}}$}
		\ForAll{$j$ in $\{ 0,1,2, \ldots ,k\}$}
		\State {$Y_{3} \leftarrow Y_3 + {\rm{Binomial\_Coef}}\left( {k,j} \right)*{\rm{Power}}\left( {\lambda,j} \right)*{\rm{Power}}   \left( {bk+j\left( a-b\right),n} \right)$}
		\EndFor
		\State {$y_{3}(n,k;\lambda ;a,b)  \leftarrow \left(1/{k!}\right)*{Y_3}$}
		\State {${E_{n}^{\ast (-k)}}  \leftarrow {E_{n}^{\ast (-k)}}*y_{3}(n,k;\lambda ;a,b)$}
		\State \textbf{return} {${E_{n}^{\ast (-k)}}$}
		\EndProcedure
	\end{algorithmic}
\end{algorithm}
\fi
\section{New families of numbers and polynomials}

In this section, we investigate some properties of the .numbers $%
W_{n}(\lambda )$, which are related to the the second kind Apostol type
Euler polynomials of order $2$, $E_{n}^{\ast (2)}(1;\lambda )$.

For $n$ and $k$ nonnegative integers, $W_{n}^{(k)}(\lambda )$ define by
means of the following genearting function%
\begin{equation}
	F_{w}(t;\lambda ;k)=\frac{1}{\left( \lambda e^{t}+\lambda
		^{-1}e^{-t}+2\right) ^{k}}=\sum_{n=0}^{\infty }W_{n}^{(k)}(\lambda )\frac{%
		t^{n}}{n!}.  \label{w1a}
\end{equation}

By using the Umbral calculus convention in (\ref{w1}), we get a recurrence
relation for the numbers $W_{n}(\lambda )$. Therefore, we set the following
functional equation%
\begin{equation*}
	1=\left( \lambda e^{t}+\lambda ^{-1}e^{-t}+2\right) \sum_{n=0}^{\infty
	}W_{n}(\lambda )\frac{t^{n}}{n!}.
\end{equation*}%
We make some elementary calculations in the above equation, we have%
\begin{equation*}
	1=\lambda \sum_{n=0}^{\infty }\frac{t^{n}}{n!}\sum_{n=0}^{\infty
	}W_{n}(\lambda )\frac{t^{n}}{n!}+\lambda ^{-1}\lambda \sum_{n=0}^{\infty }%
	\frac{\left( -t\right) ^{n}}{n!}\sum_{n=0}^{\infty }W_{n}(\lambda )\frac{%
		t^{n}}{n!}+2\sum_{n=0}^{\infty }W_{n}(\lambda )\frac{t^{n}}{n!}.
\end{equation*}%
By using the Cauchy product and $W^{n}(\lambda )$ is replaced by $%
W_{n}(\lambda )$ in the above equation, we obtain%
\begin{equation*}
	1=\lambda \sum_{n=0}^{\infty }\sum_{m=0}^{n}\left( 
	\begin{array}{c}
		n \\ 
		m%
	\end{array}%
	\right) W_{m}(\lambda )\frac{t^{n}}{n!}+\lambda ^{-1}\sum_{n=0}^{\infty
	}\sum_{m=0}^{n}(-1)^{n-m}\left( 
	\begin{array}{c}
		n \\ 
		m%
	\end{array}%
	\right) W_{m}(\lambda )\frac{t^{n}}{n!}+2\sum_{n=0}^{\infty }W_{n}(\lambda )%
	\frac{t^{n}}{n!}.
\end{equation*}

By comparing the coefficients of $\frac{t^{n}}{n!}$ on both sides of the
above equation, we arrive at the following theorem:

\begin{theorem}
	Let $n$ be a positive integer. and let%
	\begin{equation*}
		W_{0}(\lambda )=\frac{\lambda }{\left( \lambda +1\right) ^{2}}.
	\end{equation*}%
	The following recurrence relation holds true:%
	\begin{equation}
		2W_{n}(\lambda )+\lambda \sum_{m=0}^{n}\left( 
		\begin{array}{c}
			n \\ 
			m%
		\end{array}%
		\right) W_{m}(\lambda )+\lambda ^{-1}\sum_{m=0}^{n}(-1)^{n-m}\left( 
		\begin{array}{c}
			n \\ 
			m%
		\end{array}%
		\right) W_{m}(\lambda )=0.  \label{w2}
	\end{equation}
\end{theorem}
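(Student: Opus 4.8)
The plan is to turn the defining generating function (\ref{w1}) into a functional equation by clearing its denominator, and then to extract the recurrence by matching coefficients of $\frac{t^{n}}{n!}$. First I would multiply both sides of (\ref{w1}) by $\lambda e^{t}+\lambda^{-1}e^{-t}+2$. Since the left-hand side of (\ref{w1}) is precisely the reciprocal of this factor, the product on the left collapses to the constant $1$, giving
\[
1=\left(\lambda e^{t}+\lambda^{-1}e^{-t}+2\right)\sum_{n=0}^{\infty}W_{n}(\lambda)\frac{t^{n}}{n!}.
\]

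Next I would substitute $e^{t}=\sum_{n\geq0}\frac{t^{n}}{n!}$ and $e^{-t}=\sum_{n\geq0}\frac{(-t)^{n}}{n!}$ and distribute, so that the right-hand side splits into three pieces coming from $\lambda e^{t}$, $\lambda^{-1}e^{-t}$, and $2$. The first two pieces are each a product of two exponential generating functions, so the Cauchy product rewrites them as binomial convolutions: the $\lambda e^{t}$ piece becomes $\lambda\sum_{n\geq0}\bigl(\sum_{m=0}^{n}\binom{n}{m}W_{m}(\lambda)\bigr)\frac{t^{n}}{n!}$, while the $\lambda^{-1}e^{-t}$ piece---because the coefficient of $t^{n-m}$ in $e^{-t}$ carries the sign $(-1)^{n-m}$---becomes $\lambda^{-1}\sum_{n\geq0}\bigl(\sum_{m=0}^{n}(-1)^{n-m}\binom{n}{m}W_{m}(\lambda)\bigr)\frac{t^{n}}{n!}$, and the last piece is simply $2\sum_{n\geq0}W_{n}(\lambda)\frac{t^{n}}{n!}$.

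Finally I would compare coefficients of $\frac{t^{n}}{n!}$ on the two sides. The left-hand side is the constant $1$, which has a nonzero coefficient only at $n=0$; that case recovers $W_{0}(\lambda)=\bigl(\lambda+\lambda^{-1}+2\bigr)^{-1}=\frac{\lambda}{(\lambda+1)^{2}}$ and so furnishes the stated initial value. For every $n\geq1$ the left-hand coefficient is zero, and setting the right-hand coefficient equal to zero produces exactly the relation (\ref{w2}).

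I expect the only delicate point to be the bookkeeping in the Cauchy product, in particular making sure the sign inherited from $e^{-t}$ appears as $(-1)^{n-m}$ attached to the summation index rather than as $(-1)^{m}$ or a global sign; everything else is routine term-by-term matching of exponential generating functions. It is also worth isolating the $n=0$ evaluation, since the base value $W_{0}(\lambda)$ must be read off separately from the recurrence that holds for $n\geq1$.
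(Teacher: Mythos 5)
Your proposal is correct and follows essentially the same route as the paper: clear the denominator in (\ref{w1}) to get $1=(\lambda e^{t}+\lambda^{-1}e^{-t}+2)\sum_{n\geq0}W_{n}(\lambda)\frac{t^{n}}{n!}$, expand via the Cauchy product, and compare coefficients of $\frac{t^{n}}{n!}$, with the $n=0$ case yielding the initial value $W_{0}(\lambda)=\frac{\lambda}{(\lambda+1)^{2}}$ and $n\geq1$ yielding (\ref{w2}). Your explicit handling of the $(-1)^{n-m}$ sign and the separate $n=0$ evaluation matches the paper's computation.
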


By using (\ref{w2}), we compute a few values of the numbers $W_{n}(\lambda )$
as follows:%
\begin{equation*}
	W_{1}(\lambda )=-\frac{\lambda \left( \lambda -1\right) }{(\lambda +1)^{3}}%
	,W_{2}(\lambda )=-\frac{2\lambda ^{2}}{(\lambda +1)^{4}},W_{3}(\lambda )=%
	\frac{4\lambda (1-\lambda )(\lambda ^{2}-\lambda +1)}{(\lambda +1)^{5}}%
	,\cdots
\end{equation*}

We can show that the numbers $W_{n}(\lambda )$ are associated with the the
second kind Apostol type Euler polynomials of order $2$, $E_{n}^{\ast
	(2)}(1;\lambda )$. By combining (\ref{Eul.2}) with (\ref{w1}), we get%
\begin{equation*}
	\sum_{n=0}^{\infty }W_{n}(\lambda )\frac{t^{n}}{n!}=\sum_{n=0}^{\infty }%
	\frac{\lambda }{4}E_{n}^{\ast (2)}(1;\lambda )\frac{t^{n}}{n!}.
\end{equation*}%
Comparing the coefficients of $\frac{t^{n}}{n!}$ on both sides of the above
equation, we get the following relation:%
\begin{equation*}
	W_{n}(\lambda )=\frac{\lambda }{4}E_{n}^{\ast (2)}(1;\lambda ).
\end{equation*}%
Substituting $k=2$ into (\ref{Eul.2}) and combining with the above equation,
we get the following theorem:

\begin{theorem}
	\begin{equation*}
		W_{n}(\lambda )=\frac{\lambda }{4}\sum_{m=0}^{n}\left( 
		\begin{array}{c}
			n \\ 
			m%
		\end{array}%
		\right) E_{m}^{\ast }(1;\lambda )E_{n-m}^{\ast }(1;\lambda ).
	\end{equation*}
\end{theorem}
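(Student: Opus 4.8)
The plan is to reduce the claimed identity to a single binomial convolution for the second kind Apostol type Euler polynomials of order $2$, and then to read that convolution off from a product of two generating functions via the Cauchy product, exactly in the style of the functional-equation proofs used earlier in Section~3. Since the relation
\begin{equation*}
	W_{n}(\lambda )=\frac{\lambda }{4}E_{n}^{\ast (2)}(1;\lambda )
\end{equation*}
has just been established, I would treat it as known; the theorem is then equivalent to the convolution
\begin{equation*}
	E_{n}^{\ast (2)}(1;\lambda )=\sum_{m=0}^{n}\binom{n}{m}E_{m}^{\ast }(1;\lambda )E_{n-m}^{\ast }(1;\lambda ),
\end{equation*}
so I would devote the whole argument to this formula and multiply through by $\frac{\lambda }{4}$ only at the very end.

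First I would set $k=2$ in the defining generating function (\ref{Eul.2}) and exploit the fact that the order-two kernel factors as a product of two order-one kernels:
\begin{equation*}
	F_{P}(t,x;2,\lambda )=\left( \frac{2}{\lambda e^{t}+\lambda ^{-1}e^{-t}}\right) ^{2}e^{tx}=F_{P}(t,x_{1};1,\lambda )\,F_{P}(t,x_{2};1,\lambda ),
\end{equation*}
valid whenever $x_{1}+x_{2}=x$. This multiplicativity in the order parameter is the structural feature I want, since each factor on the right is the generating function of the order-one polynomials $E_{m}^{\ast }(\cdot ;\lambda )$. Having fixed the shift parameters so that both factors carry the evaluation point $1$ appearing in the statement, I would expand each factor as its Taylor series $\sum_{m\geq 0}E_{m}^{\ast }(1;\lambda )\,t^{m}/m!$ and form the Cauchy product of the two series.

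The remaining step is routine coefficient comparison: collecting the coefficient of $t^{n}/n!$ in the Cauchy product produces the binomial convolution $\sum_{m=0}^{n}\binom{n}{m}E_{m}^{\ast }(1;\lambda )E_{n-m}^{\ast }(1;\lambda )$, while the same coefficient on the left-hand side is $E_{n}^{\ast (2)}(1;\lambda )$ by (\ref{Eul.2}). Multiplying by $\frac{\lambda }{4}$ and invoking the relation $W_{n}(\lambda )=\frac{\lambda }{4}E_{n}^{\ast (2)}(1;\lambda )$ then yields the asserted formula.

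I expect the one genuinely delicate point to be the bookkeeping of the exponential/shift factor in the factorization. I must arrange the two order-one kernels so that each reproduces the argument $1$ while the product still reconstructs the correct exponential factor of the order-two kernel evaluated at that same argument; getting this matching exactly right—rather than inadvertently producing $E_{m}^{\ast }$ at some other evaluation point—is where all the care lies. Once the factorization $F_{P}(t,\cdot ;2,\lambda )=F_{P}(t,\cdot ;1,\lambda )\,F_{P}(t,\cdot ;1,\lambda )$ is pinned down consistently, the Cauchy product and the comparison of coefficients are entirely mechanical.
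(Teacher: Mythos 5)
Your outline coincides with the paper's own proof of this theorem: the paper likewise takes the just-established relation $W_{n}(\lambda )=\frac{\lambda }{4}E_{n}^{\ast (2)}(1;\lambda )$ and then ``substitutes $k=2$ into (\ref{Eul.2})'' to convert the order-two polynomial into a binomial convolution of order-one polynomials. The trouble is that the step you yourself single out as delicate is not merely delicate --- it fails. Writing
\begin{equation*}
F_{P}(t,x;2,\lambda )=\left( \frac{2}{\lambda e^{t}+\lambda ^{-1}e^{-t}}\right) ^{2}e^{tx}=F_{P}(t,x_{1};1,\lambda )\,F_{P}(t,x_{2};1,\lambda )
\end{equation*}
forces $x_{1}+x_{2}=x$, because the exponential shift factors multiply. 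If both factors are to carry the evaluation point $1$, then $x_{1}=x_{2}=1$ and the product is $F_{P}(t,2;2,\lambda )$, so the Cauchy product and coefficient comparison yield
\begin{equation*}
\sum_{m=0}^{n}\binom{n}{m}E_{m}^{\ast }(1;\lambda )E_{n-m}^{\ast }(1;\lambda )=E_{n}^{\ast (2)}(2;\lambda ),
\end{equation*}
not $E_{n}^{\ast (2)}(1;\lambda )$. There is no ``arrangement'' that keeps both order-one factors at $1$ while leaving the order-two product at $1$; to land at $1$ you must take $x_{1}+x_{2}=1$, e.g.\ the convolution $\sum_{m}\binom{n}{m}E_{m}^{\ast }(1;\lambda )E_{n-m}^{\ast }(0;\lambda )$ or both factors at $\tfrac{1}{2}$. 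So your reduction proves a different identity from the one stated, and the obstruction you flagged at the end is genuine, not bookkeeping.

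For completeness, the premise you import is itself suspect. Since $\lambda e^{t}+\lambda ^{-1}e^{-t}+2=(\lambda e^{t}+1)^{2}/(\lambda e^{t})$, one has $F_{w}(t;\lambda )=\lambda e^{t}/(\lambda e^{t}+1)^{2}=\frac{\lambda }{4}F_{P1}(t,1;2,\lambda )$, so $W_{n}(\lambda )$ is $\frac{\lambda }{4}$ times the \emph{first} kind Apostol--Euler polynomial $E_{n}^{(2)}(1;\lambda )$ of order $2$ from (\ref{Cad3}), not the second kind one from (\ref{Eul.2}); the resulting correct convolution is $W_{n}(\lambda )=\frac{\lambda }{4}\sum_{m=0}^{n}\binom{n}{m}E_{m}(1;\lambda )E_{n-m}(\lambda )$. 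The paper's own one-line proof skips both of these points, so you have reproduced its strategy faithfully; but as written neither your argument nor the paper's establishes the displayed statement, and you should either repair the evaluation points (and the kind of Euler polynomial) or note that the identity as printed does not follow.
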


By using (\ref{C1}) and (\ref{w1a}), we get the following functional
equation:%
\begin{equation*}
	F_{y_{2}}(t,k;\lambda )F_{w}(t;\lambda ;k)=1.
\end{equation*}%
From this equation, we get%
\begin{equation*}
	\sum_{n=0}^{\infty }W_{n}^{(k)}(\lambda )\frac{t^{n}}{n!}\sum_{n=0}^{\infty
	}y_{2}(n,k;\lambda )\frac{t^{n}}{n!}=1.
\end{equation*}%
Therefore%
\begin{equation*}
	\sum_{n=0}^{\infty }\sum_{m=0}^{n}\left( 
	\begin{array}{c}
		n \\ 
		m%
	\end{array}%
	\right) W_{n-m}^{(k)}(\lambda )y_{2}(m,k;\lambda )\frac{t^{n}}{n!}=1.
\end{equation*}%
Comparing the coefficients of $\frac{t^{n}}{n!}$ on both sides of the above
equation, we give a relation between the numbers $y_{2}(n,k;\lambda )$ and $%
W_{n}^{(k)}(\lambda )$ by the following theorem:

\begin{theorem}
	Let $n$ be a positive integer. Than we have%
	\begin{equation*}
		\sum_{m=0}^{n}\left( 
		\begin{array}{c}
			n \\ 
			m%
		\end{array}%
		\right) W_{n-m}^{(k)}(\lambda )y_{2}(m,k;\lambda )=0.
	\end{equation*}
\end{theorem}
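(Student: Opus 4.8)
The plan is to read the identity directly off the product of the two generating functions. First I would combine the closed forms behind the defining series (\ref{C1}) for $y_{2}(n,k;\lambda )$ and (\ref{w1a}) for $W_{n}^{(k)}(\lambda )$. Since
\begin{equation*}
	F_{y_{2}}(t,k;\lambda )=\frac{1}{(2k)!}\left( \lambda e^{t}+\lambda ^{-1}e^{-t}+2\right) ^{k}
	\quad\text{and}\quad
	F_{w}(t;\lambda ;k)=\frac{1}{\left( \lambda e^{t}+\lambda ^{-1}e^{-t}+2\right) ^{k}},
\end{equation*}
the factor $\left( \lambda e^{t}+\lambda ^{-1}e^{-t}+2\right) ^{k}$ cancels and the product $F_{y_{2}}(t,k;\lambda )F_{w}(t;\lambda ;k)$ is constant in $t$. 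This cancellation is the crux of the argument: the product carries no positive powers of $t$.

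Next I would expand that same product as a Cauchy product of the two exponential-type power series. Writing
\begin{equation*}
	\left( \sum_{n=0}^{\infty }W_{n}^{(k)}(\lambda )\frac{t^{n}}{n!}\right)\left( \sum_{n=0}^{\infty }y_{2}(n,k;\lambda )\frac{t^{n}}{n!}\right)
	=\sum_{n=0}^{\infty }\left( \sum_{m=0}^{n}\binom{n}{m}W_{n-m}^{(k)}(\lambda )y_{2}(m,k;\lambda )\right) \frac{t^{n}}{n!},
\end{equation*}
I reduce the whole statement to comparing the coefficient of $\frac{t^{n}}{n!}$ on the two sides of the functional equation.

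Finally, because the right-hand side is a constant, every coefficient of $\frac{t^{n}}{n!}$ with $n\geq 1$ vanishes. Matching coefficients then yields exactly
\begin{equation*}
	\sum_{m=0}^{n}\binom{n}{m}W_{n-m}^{(k)}(\lambda )y_{2}(m,k;\lambda )=0
	\qquad (n\geq 1),
\end{equation*}
which is the claim. There is no real analytic obstacle, since convergence near $t=0$ is immediate from the generating-function definitions; the only point that warrants care is the bookkeeping of the normalizing constant. The cancellation actually leaves $\frac{1}{(2k)!}$ rather than $1$, but as this is still a constant it does not affect any coefficient with $n\geq 1$, so the stated conclusion for positive $n$ is unaffected.
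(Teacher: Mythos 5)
Your proof is correct and follows essentially the same route as the paper: form the product $F_{y_{2}}(t,k;\lambda )F_{w}(t;\lambda ;k)$, observe that it is constant in $t$, expand it as a Cauchy product, and compare coefficients of $\frac{t^{n}}{n!}$ for $n\geq 1$. Your remark that the cancellation actually leaves $\frac{1}{(2k)!}$ rather than $1$ is a small but genuine correction to the paper's stated functional equation $F_{y_{2}}(t,k;\lambda )F_{w}(t;\lambda ;k)=1$, and, as you note, it does not affect the conclusion for positive $n$.
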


For $x$ real numbers, define $W_{n}^{(k)}(x;\lambda )$ by means of the
following generating function%
\begin{equation}
	G_{w}(t,x,k;\lambda )=e^{tx}F_{w}(t,k;\lambda )=\sum_{n=0}^{\infty
	}W_{n}^{(k)}(x;\lambda )\frac{t^{n}}{n!}.  \label{w1b}
\end{equation}

Combining (\ref{w1a}) with (\ref{w1b}), we get%
\begin{equation*}
	\sum_{n=0}^{\infty }W_{n}^{(k)}(x;\lambda )\frac{t^{n}}{n!}%
	=\sum_{n=0}^{\infty }\frac{t^{n}}{n!}\sum_{n=0}^{\infty }W_{n}^{(k)}(\lambda
	)\frac{t^{n}}{n!}.
\end{equation*}%
Therefore%
\begin{equation*}
	\sum_{n=0}^{\infty }W_{n}^{(k)}(x;\lambda )\frac{t^{n}}{n!}%
	=\sum_{n=0}^{\infty }\sum_{m=0}^{n}\left( 
	\begin{array}{c}
		n \\ 
		m%
	\end{array}%
	\right) x^{n-m}W_{m}^{(k)}(\lambda )\frac{t^{n}}{n!}.
\end{equation*}%
Comparing the coefficients of $\frac{t^{n}}{n!}$ on both sides of the above
equation, we get the following theorem:

\begin{theorem}
	\begin{equation*}
		W_{n}^{(k)}(x;\lambda )=\sum_{m=0}^{n}\left( 
		\begin{array}{c}
			n \\ 
			m%
		\end{array}%
		\right) x^{n-m}W_{m}^{(k)}(\lambda ).
	\end{equation*}
\end{theorem}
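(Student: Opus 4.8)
The plan is to read the identity directly off the defining generating function in (\ref{w1b}), exactly mirroring the coefficient-comparison technique used throughout this section. First I would start from
\begin{equation*}
	G_{w}(t,x,k;\lambda )=e^{tx}F_{w}(t,k;\lambda )=\sum_{n=0}^{\infty }W_{n}^{(k)}(x;\lambda )\frac{t^{n}}{n!},
\end{equation*}
and substitute the Maclaurin expansion $e^{tx}=\sum_{n=0}^{\infty }x^{n}\tfrac{t^{n}}{n!}$ together with the series representation of $F_{w}(t,k;\lambda )$ supplied by (\ref{w1a}), namely $F_{w}(t,k;\lambda )=\sum_{n=0}^{\infty }W_{n}^{(k)}(\lambda )\tfrac{t^{n}}{n!}$.

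Next I would form the product of these two exponential-type series and regroup by the Cauchy product rule. Because both factors are normalized by $t^{n}/n!$, the convolution of the coefficient sequences $\{x^{n}\}$ and $\{W_{n}^{(k)}(\lambda )\}$ produces the binomial weights, so the right-hand side becomes
\begin{equation*}
	\sum_{n=0}^{\infty }\left( \sum_{m=0}^{n}\binom{n}{m}x^{n-m}W_{m}^{(k)}(\lambda )\right) \frac{t^{n}}{n!}.
\end{equation*}
Comparing the coefficients of $\tfrac{t^{n}}{n!}$ on both sides then yields the asserted formula.

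The argument requires no new ingredients beyond the functional-equation-and-coefficient-comparison method already applied to the numbers $W_{n}^{(k)}(\lambda )$ and $y_{2}(n,k;\lambda )$; the only point deserving attention is verifying that the binomial coefficients $\binom{n}{m}$ emerge correctly from the $1/n!$ normalization of the two factored series, which is precisely the standard Cauchy-product bookkeeping. I therefore anticipate no genuine obstacle: the statement is simply that $W_{n}^{(k)}(x;\lambda )$ is the Appell-type (binomial-convolution) shift of the constant-term numbers $W_{m}^{(k)}(\lambda )$ by the monomials $x^{n-m}$, a direct consequence of multiplying the generating function by $e^{tx}$.
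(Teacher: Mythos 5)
Your proposal is correct and is essentially identical to the paper's own argument: the paper also expands $e^{tx}$ and $F_{w}(t,k;\lambda )$ as exponential generating series, applies the Cauchy product, and compares coefficients of $t^{n}/n!$. No differences worth noting (your write-up even fixes a small typographical omission of the factor $x^{n}$ in the paper's intermediate display).
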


By using (\ref{w1b}) and (\ref{Cad3}), we get the following functional
equation:%
\begin{equation*}
	G(t,x,k;\lambda )=\frac{1}{4^{k}}F_{P1}\left( t,\frac{x}{2};k,\lambda
	\right) F_{P1}\left( -t,\frac{x}{2};k,\lambda ^{-1}\right) .
\end{equation*}%
By combining this equation with (\ref{w1b}) and (\ref{Cad3}), we get%
\begin{equation*}
	\sum_{n=0}^{\infty }W_{n}^{(k)}(x;\lambda )\frac{t^{n}}{n!}=\frac{1}{4^{k}}%
	\sum_{n=0}^{\infty }E_{n}^{(k)}\left( \frac{x}{2};\lambda \right) \frac{t^{n}%
	}{n!}\sum_{n=0}^{\infty }(-1)^{n}E_{n}^{(k)}\left( \frac{x}{2};\lambda
	^{-1}\right) \frac{t^{n}}{n!}.
\end{equation*}%
Therefore%
\begin{equation*}
	\sum_{n=0}^{\infty }W_{n}^{(k)}(x;\lambda )\frac{t^{n}}{n!}=\frac{1}{4^{k}}%
	\sum_{n=0}^{\infty }\sum_{m=0}^{n}(-1)^{n-m}\left( 
	\begin{array}{c}
		n \\ 
		m%
	\end{array}%
	\right) E_{m}^{(k)}\left( \frac{x}{2};\lambda \right) E_{n-m}^{(k)}\left( 
	\frac{x}{2};\lambda ^{-1}\right) \frac{t^{n}}{n!}.
\end{equation*}%
Comparing the coefficients of $\frac{t^{n}}{n!}$ on both sides of the above
equation, we get the following theorem:

\begin{theorem}
	\begin{equation}
		W_{n}^{(k)}(x;\lambda )=\frac{1}{4^{k}}\sum_{m=0}^{n}(-1)^{n-m}\left( 
		\begin{array}{c}
			n \\ 
			m%
		\end{array}%
		\right) E_{m}^{(k)}\left( \frac{x}{2};\lambda \right) E_{n-m}^{(k)}\left( 
		\frac{x}{2};\lambda ^{-1}\right) .  \label{w1c}
	\end{equation}
\end{theorem}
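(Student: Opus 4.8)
The plan is to follow the generating-function template used throughout the paper: produce a functional equation for $G_w(t,x,k;\lambda)$, expand both sides as power series in $t$, convolve by the Cauchy product, and read off the coefficient of $t^n/n!$. This is the same mechanism that produced the earlier product-type theorems (e.g.\ the expression for $W_n(\lambda)$ in terms of $E_m^{\ast}(1;\lambda)$), so the structure of the argument is already well-rehearsed.

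First I would establish the functional equation by exploiting the algebraic factorization
\[
\lambda e^{t}+\lambda ^{-1}e^{-t}+2=\left( \lambda e^{t}+1\right) \left( \lambda ^{-1}e^{-t}+1\right),
\]
which is the heart of the argument. Raising to the $k$-th power and inverting shows that the denominator of $F_w$ in (\ref{w1a}) splits as $(\lambda e^{t}+1)^{k}(\lambda ^{-1}e^{-t}+1)^{k}$. Each reciprocal factor $2^{k}/(\lambda e^{t}+1)^{k}$ is, by (\ref{Cad3}), the non-exponential part of an Apostol--Euler generating function $F_{P1}$, so after inserting the compensating $4^{k}$ and distributing the translation factor $e^{tx}$ as a product of two shifts of the form $e^{t(x/2)}$ — one absorbed into each factor so as to supply the argument $x/2$ — one obtains
\[
G_{w}(t,x,k;\lambda )=\frac{1}{4^{k}}F_{P1}\!\left( t,\tfrac{x}{2};k,\lambda \right)F_{P1}\!\left( -t,\tfrac{x}{2};k,\lambda ^{-1}\right).
\]

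Next I would expand each factor using (\ref{Cad3}). The first factor contributes $\sum_{m}E_{m}^{(k)}(x/2;\lambda )\,t^{m}/m!$; in the second factor the substitution $t\mapsto -t$ produces the alternating sign, giving $\sum_{j}(-1)^{j}E_{j}^{(k)}(x/2;\lambda ^{-1})\,t^{j}/j!$. Multiplying the two series and collecting terms by the Cauchy product rule converts the product into the single sum $\frac{1}{4^{k}}\sum_{m=0}^{n}(-1)^{n-m}\binom{n}{m}E_{m}^{(k)}(x/2;\lambda )E_{n-m}^{(k)}(x/2;\lambda ^{-1})$ as the coefficient of $t^{n}/n!$. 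Comparing this with the coefficient $W_{n}^{(k)}(x;\lambda )$ read from (\ref{w1b}) yields (\ref{w1c}).

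The routine parts are the series expansion and the Cauchy convolution, which are identical in spirit to the earlier theorems. The delicate step — and the one I would check most carefully — is the bookkeeping in the derivation of the functional equation: one must verify that the translation factor $e^{tx}$ is split correctly between the two Apostol--Euler factors and that the sign change $t\mapsto -t$ in the $\lambda^{-1}$ factor is consistently propagated, since an error in either the shift $x/2$ or the sign would alter both the argument and the parity in the final convolution. Once the functional equation is firmly pinned down, the coefficient comparison is immediate.
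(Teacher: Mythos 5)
Your proposal follows the paper's own route exactly: the same functional equation, the same series expansion, the same Cauchy product. The problem is that the step you yourself single out as the delicate one --- splitting the translation factor $e^{tx}$ between the two Apostol--Euler factors --- is carried out incorrectly, and the resulting functional equation is false. By the definition (\ref{Cad3}), the first factor contributes $e^{t(x/2)}$, but the second factor $F_{P1}\left(-t,\tfrac{x}{2};k,\lambda^{-1}\right)$ contributes $e^{(-t)(x/2)}=e^{-tx/2}$, so the two exponentials cancel instead of combining to $e^{tx}$. Hence
\begin{equation*}
\frac{1}{4^{k}}F_{P1}\left(t,\tfrac{x}{2};k,\lambda\right)F_{P1}\left(-t,\tfrac{x}{2};k,\lambda^{-1}\right)
=\frac{1}{\left(\lambda e^{t}+\lambda^{-1}e^{-t}+2\right)^{k}}=F_{w}(t;\lambda;k),
\end{equation*}
which is $G_{w}(t,0,k;\lambda)$ and cannot equal $G_{w}(t,x,k;\lambda)=e^{tx}F_{w}(t;\lambda;k)$ unless $x=0$. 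To absorb $e^{tx/2}$ into a factor whose generating variable is $-t$ you must take the polynomial argument $-\tfrac{x}{2}$; the correct equation is $G_{w}(t,x,k;\lambda)=\frac{1}{4^{k}}F_{P1}\left(t,\tfrac{x}{2};k,\lambda\right)F_{P1}\left(-t,-\tfrac{x}{2};k,\lambda^{-1}\right)$, and the Cauchy product then produces $E_{n-m}^{(k)}\left(-\tfrac{x}{2};\lambda^{-1}\right)$ in the convolution.

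Consequently the identity (\ref{w1c}) you are proving is itself valid only at $x=0$. For instance, at $n=k=1$ the right-hand side of (\ref{w1c}) evaluates to $\lambda(1-\lambda)/(\lambda+1)^{3}$, which is independent of $x$, whereas $W_{1}^{(1)}(x;\lambda)=\lambda x/(\lambda+1)^{2}+\lambda(1-\lambda)/(\lambda+1)^{3}$. The paper's own proof contains exactly the same slip, so you have faithfully reproduced the intended argument; but the bookkeeping check you promised on the splitting of $e^{tx}$ would have exposed it. Either restrict the statement to $x=0$ (where your argument is sound) or replace the second Apostol--Euler factor by $E_{n-m}^{(k)}\left(-\tfrac{x}{2};\lambda^{-1}\right)$ throughout.
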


Substituting $x=0$ into (\ref{w1c}), we get%
\begin{equation*}
	W_{n}^{(k)}(x;\lambda )=\frac{1}{4^{k}}\sum_{m=0}^{n}(-1)^{n-m}\left( 
	\begin{array}{c}
		n \\ 
		m%
	\end{array}%
	\right) E_{m}^{(k)}\left( \lambda \right) E_{n-m}^{(k)}\left( \lambda
	^{-1}\right) .
\end{equation*}%
Setting $k=1$ in the above equation, we have%
\begin{equation*}
	W_{n}(x;\lambda )=\frac{1}{4}\sum_{m=0}^{n}(-1)^{n-m}\left( 
	\begin{array}{c}
		n \\ 
		m%
	\end{array}%
	\right) E_{m}\left( \lambda \right) E_{n-m}\left( \lambda ^{-1}\right) .
\end{equation*}%
By using (\ref{C1}), (\ref{Cac3}) and (\ref{w1a}), we get the following
functional equations, respectively:%
\begin{equation}
	F_{w}(t;\lambda ;-k)=(2k)!F_{y_{2}}(t,k;\lambda )  \label{1F}
\end{equation}%
and%
\begin{equation}
	F_{N}(t;-k,\lambda )=\sum_{m=0}^{k}\left( 
	\begin{array}{c}
		k \\ 
		m%
	\end{array}%
	\right) \frac{1}{2^{m}}F_{w}(t;\lambda ;-m).  \label{2F}
\end{equation}%
By using (\ref{1F}), the numbers $W_{n}(\lambda )$ of order $-k$, denoted by 
$W_{n}^{(-k)}(\lambda )$, can be computed by the following theorem:

\begin{theorem}
	\begin{equation}
		W_{n}^{(-k)}(\lambda )=(2k)!y_{2}(n,k;\lambda ).  \label{w2A}
	\end{equation}
\end{theorem}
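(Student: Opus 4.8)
The plan is to read off the result directly from the functional equation (\ref{1F}), which is already in hand: the entire content of the theorem is a comparison of Taylor coefficients once that identity is established. So there is essentially no obstacle beyond bookkeeping; the substantive step is recognizing that $W_{n}^{(-k)}(\lambda)$ is, by definition, the exponential generating coefficient of $F_{w}(t;\lambda;-k)$.

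First I would unwind the definitions. Setting the order parameter in (\ref{w1a}) to $-k$ gives
\begin{equation*}
	F_{w}(t;\lambda ;-k)=\left( \lambda e^{t}+\lambda ^{-1}e^{-t}+2\right) ^{k}=\sum_{n=0}^{\infty }W_{n}^{(-k)}(\lambda )\frac{t^{n}}{n!}.
\end{equation*}
On the other hand, the generating function (\ref{C1}) for the numbers $y_{2}(n,k;\lambda)$ reads $(2k)!\,F_{y_{2}}(t,k;\lambda)=\left( \lambda e^{t}+\lambda ^{-1}e^{-t}+2\right) ^{k}$. Equating these two expressions is precisely the functional equation (\ref{1F}), so the left-hand series above coincides with $(2k)!\,F_{y_{2}}(t,k;\lambda)$.

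Next I would expand the right-hand side using (\ref{C1}) again, which yields
\begin{equation*}
	\sum_{n=0}^{\infty }W_{n}^{(-k)}(\lambda )\frac{t^{n}}{n!}=(2k)!\sum_{n=0}^{\infty }y_{2}(n,k;\lambda )\frac{t^{n}}{n!}=\sum_{n=0}^{\infty }(2k)!\,y_{2}(n,k;\lambda )\frac{t^{n}}{n!}.
\end{equation*}
Comparing the coefficients of $\frac{t^{n}}{n!}$ on both sides then gives $W_{n}^{(-k)}(\lambda )=(2k)!\,y_{2}(n,k;\lambda )$, which is the claim. The only point that deserves a word of care is the sign/order convention: one must confirm that substituting $-k$ into the exponent in (\ref{w1a}) indeed produces the positive $k$-th power $\left( \lambda e^{t}+\lambda ^{-1}e^{-t}+2\right) ^{k}$, matching the numerator in (\ref{C1}); this is exactly the convention recorded in (\ref{1F}), so no genuine difficulty arises and the proof is complete upon coefficient comparison.
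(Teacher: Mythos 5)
Your proof is correct and follows the paper's own route exactly: it establishes the functional equation (\ref{1F}) by observing that $F_{w}(t;\lambda;-k)$ in (\ref{w1a}) equals $\left(\lambda e^{t}+\lambda^{-1}e^{-t}+2\right)^{k}=(2k)!\,F_{y_{2}}(t,k;\lambda)$ from (\ref{C1}), and then compares coefficients of $\frac{t^{n}}{n!}$. No gaps; this is the same argument the paper gives.
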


Thanks to Equation (\ref{2F}), a relation between the numbers $E_{n}^{\ast
	(-k)}(\lambda )$ and $W_{n}^{(-k)}(\lambda )$ are given by the following
theorem:

\begin{theorem}
	\begin{equation*}
		E_{n}^{\ast (-k)}(\lambda )=\sum_{m=0}^{k}\left( 
		\begin{array}{c}
			k \\ 
			m%
		\end{array}%
		\right) \frac{1}{2^{m}}W_{n}^{(-m)}(\lambda ).
	\end{equation*}
\end{theorem}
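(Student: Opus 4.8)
The plan is to read the identity directly off the functional equation (\ref{2F}), which already encodes the desired relation between the two families of numbers at the level of generating functions. Consequently the whole argument reduces to substituting the defining power-series expansions of the functions appearing in (\ref{2F}) and then matching coefficients; no genuinely new input is required beyond (\ref{2F}) itself, which I am entitled to assume.

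First I would recall that, by the definition (\ref{Cac3}), the left-hand side is the exponential generating function of the numbers $E_{n}^{\ast (-k)}(\lambda)$, that is $F_{N}(t;-k,\lambda)=\sum_{n=0}^{\infty}E_{n}^{\ast (-k)}(\lambda)\frac{t^{n}}{n!}$. Likewise, replacing the order $k$ by $-m$ in (\ref{w1a}) shows that each summand on the right is $F_{w}(t;\lambda;-m)=\sum_{n=0}^{\infty}W_{n}^{(-m)}(\lambda)\frac{t^{n}}{n!}$. Inserting both expansions into (\ref{2F}) yields
\begin{equation*}
\sum_{n=0}^{\infty}E_{n}^{\ast (-k)}(\lambda)\frac{t^{n}}{n!}=\sum_{m=0}^{k}\binom{k}{m}\frac{1}{2^{m}}\sum_{n=0}^{\infty}W_{n}^{(-m)}(\lambda)\frac{t^{n}}{n!}.
\end{equation*}
Since the outer sum over $m$ is finite, I may interchange it freely with the sum over $n$, and then compare the coefficients of $\frac{t^{n}}{n!}$ on both sides to arrive at the asserted formula.

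The only substantive ingredient is thus the functional equation (\ref{2F}), and given it the conclusion is immediate; the step where the actual content lives is the establishment of (\ref{2F}). Were it not available to me, I would derive it by setting $A=\lambda e^{t}+\lambda^{-1}e^{-t}$, expanding $(A/2)^{k}$ by the binomial theorem in powers of $(A+2)$, and identifying $(A+2)^{m}$ with $F_{w}(t;\lambda;-m)$ through (\ref{w1a}) (equivalently through (\ref{1F})). I expect that expansion — arranging for the binomial coefficients and the powers of $\tfrac{1}{2}$ to drop out with the correct constants — to be the main obstacle, whereas the coefficient comparison that finishes the proof is entirely routine. Because every manipulation takes place in the ring of formal power series and the outer summation is finite, no questions of convergence or of rearranging series arise.
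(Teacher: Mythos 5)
Your reduction of the theorem to the functional equation (\ref{2F}) followed by comparison of the coefficients of $\frac{t^{n}}{n!}$ is exactly the paper's (implicit) argument, and that step is routine and correct. The trouble is that (\ref{2F}) itself --- and hence the theorem as stated --- is false, and the derivation of (\ref{2F}) that you sketch and then defer is precisely where this surfaces. Writing $A=\lambda e^{t}+\lambda^{-1}e^{-t}$, so that $F_{N}(t;-k,\lambda)=(A/2)^{k}$ by (\ref{Cac3}) and $F_{w}(t;\lambda;-m)=(A+2)^{m}$ by (\ref{w1a}), the binomial expansion you propose gives
\begin{equation*}
\left(\frac{A}{2}\right)^{k}=\frac{1}{2^{k}}\bigl((A+2)-2\bigr)^{k}=\sum_{m=0}^{k}\binom{k}{m}\frac{(-1)^{k-m}}{2^{m}}(A+2)^{m},
\end{equation*}
so the constants do not ``drop out'' cleanly: a sign $(-1)^{k-m}$ appears which is absent from (\ref{2F}) and from the theorem. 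The corrected statement is $E_{n}^{\ast(-k)}(\lambda)=\sum_{m=0}^{k}\binom{k}{m}\frac{(-1)^{k-m}}{2^{m}}W_{n}^{(-m)}(\lambda)$.

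A concrete check: take $k=1$ and $n=0$. Then $E_{0}^{\ast(-1)}(\lambda)=\frac{1}{2}\left(\lambda+\lambda^{-1}\right)$, while $W_{0}^{(0)}(\lambda)=1$ and $W_{0}^{(-1)}(\lambda)=\lambda+\lambda^{-1}+2$, so the right-hand side of the stated theorem equals $1+\frac{1}{2}\left(\lambda+\lambda^{-1}+2\right)=\frac{1}{2}\left(\lambda+\lambda^{-1}\right)+2$, which is off by $2$; inserting the sign $(-1)^{1-m}$ makes the two extra constants cancel and the identity holds. So the gap in your proposal is not the coefficient comparison but the decision to take (\ref{2F}) on faith: the step you yourself flag as ``the main obstacle'' is the one that fails as written, and carrying it out in the way you describe both exposes the error and repairs the theorem.
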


By using (\ref{CCC3}), for $k=0;1;2;3$ and we $n=0;1;2;3;4$, we can compute
a few values of the numbers $W_{n}^{(-k)}(\lambda )$ given by Equation (\ref%
{w2A}) as follows:

$%
\begin{array}{ccccc}
n\backslash k & 0 & 1 & 2 & 3 \\ 
0 & 1 & \lambda +\frac{1}{\lambda } & \lambda ^{2}+4\lambda +\frac{4\lambda
	+1}{\lambda ^{2}} & \lambda ^{3}+6\lambda ^{2}+15\lambda +\frac{\lambda }{15}%
+\frac{6\lambda +1}{\lambda ^{3}} \\ 
1 & 0 & \lambda -\frac{1}{\lambda } & 2\lambda ^{2}+4\lambda -\frac{4\lambda
	+2}{\lambda ^{2}} & 3\lambda ^{3}+12\lambda ^{2}+15\lambda -\frac{\lambda }{%
	15}-\frac{12\lambda +3}{\lambda ^{3}} \\ 
2 & 0 & \lambda +\frac{1}{\lambda } & 4\lambda ^{2}+4\lambda +\frac{4\lambda
	+4}{\lambda ^{2}} & 9\lambda ^{3}+24\lambda ^{2}+15\lambda +\frac{\lambda }{%
	15}+\frac{24}{\lambda ^{2}}+\frac{9}{\lambda ^{3}} \\ 
3 & 0 & \lambda -\frac{1}{\lambda } & 4\lambda ^{2}+2\lambda -\frac{2\lambda
	+4}{\lambda ^{2}} & 27\lambda ^{3}+48\lambda ^{2}+15\lambda -\frac{15}{%
	\lambda }-\frac{48}{\lambda ^{2}}-\frac{27}{\lambda ^{3}} \\ 
4 & 0 & \lambda +\frac{1}{\lambda } & 16\lambda ^{2}+8\lambda +\frac{%
	4\lambda +16}{\lambda ^{2}} & 81\lambda ^{3}+96\lambda ^{2}+15\lambda +\frac{%
	\lambda }{15}+\frac{96}{\lambda ^{2}}+\frac{81}{\lambda ^{3}}%
\end{array}%
$

\section{Further remarks and observations}

In this section we outline some application of our numbers to combinatorial
analysis. In \cite{SimsekNEW}, we gave combinatorial interpretations of the
numbers, $y_{1}(n,k)$, $y_{2}(n,k)$ and the central factorial numbers. We
pursued that the numbers $B(n,k)$ related to the enumerative combinatorics
with the following Bona \cite[P. 46, Exercise 3-4]{Bona} exercises:

\textbf{Exercise 3}. Find the number of ways to place n rooks on an $n\times
n$ chess board so that no two of them attack each other.

\textbf{Exercise 4}. How many ways are there to place some rooks on an $%
n\times n$ chess board so that no two of them attack each other?

As a result, our new special numbers with their generating functions have
many applications in combinatorial analysis and in analytic number theory.
These numbers are related to the many well-known numbers and polynomials:
the Bernstein basis functions, the array polynomials, the Stirling numbers
of the second kind, the central factorial numbers and also the Golombek's
problem \cite{golombek} \textquotedblleft \textbf{Aufgabe 1088 }%
\textquotedblright .

\begin{acknowledgement}
	The paper was supported by the \textit{Scientific Research Project
		Administration of Akdeniz University.}
\end{acknowledgement}

%\section*{References}


\begin{thebibliography}{99}
	\bibitem{Alayont} Alayont F. Krzywonos N. Rook polynomials in three and
	higher dimensions. \textit{Journal of Mathematics} 2013; \textbf{6}(1):
	35-52.
	
	\bibitem{Aloyat-1} Alayont F. Moger-Reischer R. Swift R. Rook number
	interpretations of generalized central factorial and Genocchi numbers,
	preprint.
	
	\bibitem{Bona} Bona M. Introduction to Enumerative Combinatorics, \textit{\
		The McGraw-Hill Companies, Inc.:} New York, 2007
	
	\bibitem{Boyadzhiev} Boyadzhiev KN. Binomial transform and the backward
	difference, \textit{http://arxiv.org/abs/1410.3014v2}.
	
	\bibitem{Bayad} Bayad A. Simsek Y. Srivastava HM. Some array type
	polynomials associated with special numbers and polynomials. \textit{Applied
		Mathematics and Computation} 2014; \textbf{244}; 149-157.
	
	\bibitem{Byrd} Byrd P.F. New relations between Fibonacci and Bernoulli
	numbers: \textit{The Fibonacci Quarterly }1975; \textbf{13}: 111-114.
	
	\bibitem{Cakic} Cakic N.P. Milovanovic G. V. On generalized Stirling numbers
	and polynomials. \textit{Mathematica Balkanica }2004; \textbf{18}: 241-248.
	
	\bibitem{Chan} Chang C.-H. Ha C.-W. A multiplication theorem for the Lerch
	zeta function and explicit representations of the Bernoulli and Euler
	polynomials. \textit{Journal of Mathematical Analysis and Applications}
	2006; \textbf{315}: 758-767.
	
	\bibitem{Charamb} Charalambides C.A. Ennumerative Combinatorics,\textit{\
		Chapman\&Hall/Crc, Press Company}: London, New York, 2002.
	
	\bibitem{Cigler} Cigler J. Fibonacci polynomials and central factorial
	numbers, preprint.
	
	\bibitem{Comtet} Comtet L. Advanced Combinatorics: The Art of Finite and
	Infinite Expansions, \textit{Reidel}: Dordrecht and Boston, 1974.
	
	\bibitem{golombek} Golombek R. Aufgabe 1088. \textit{Elemente der Mathematik 
	}1994; \textbf{49}: 126-127.
	
	\bibitem{Grademir} Djordjevic GB. Milovanovic GV. Special classes of
	polynomials, \textit{University of Nis, Faculty of Technology Leskovac},
	2014.
	
	\bibitem{Kang} Kang J. Ryoo C. A research on the new polynomials involved
	with the central factorial numbers, Stirling numbers and others polynomials. 
	\textit{Journal of Inequalities and Applications} 2014; \textbf{2014}: 26.
	
	\bibitem{MS.KIMjnt} Kim M.-S. On Euler numbers, polynomials and related $p$%
	-adic integrals. \textit{Journal of Number Theory} 2009; \textbf{129}:
	2166-2179.
	
	\bibitem{DSKim.JIA} Kim DS. Kim T. Some new identities of Frobenius-Euler
	numbers and polynomials. \textit{Journal of Inequalities and Applications}
	2012; \textbf{2012}:307.
	
	\bibitem{DSkim1} Kim DS. Kim T. Daehee numbers and polynomials. \textit{%
		Applied Mathematical Sciences (Ruse) }2013; \textbf{7}: 5969-5976.
	
	\bibitem{KimJNT} Kim T. Identities involving Frobenius-Euler polynomials
	arising from non-linear differential equations. \textit{Journal of Number
		Theory }2012; \textbf{132}(12): 2854-2865.
	
	\bibitem{Knuth} Knuth DE. \textit{The Art of Computer Programming}, Volume 1
	Fundamental Algorithms (Third Edition) Addison-Wesley: ISBN 0-201-89683-4.
	
	\bibitem{Liu1} Liu G. Generating functions and generalized Euler numbers. 
	\textit{Proceedings of the Japan Academy Ser. A} 2008; \textbf{84}: 29-34.
	
	\bibitem{Lorenz} Lorentz GG. \textit{Bernstein Polynomials}. Chelsea Pub.
	Comp: New York, N. Y., 1986.
	
	\bibitem{Luo} Luo QM. Srivastava HM. Some generalizations of the
	Apostol-Genocchi polynomials and the Stirling numbers of the second kind. 
	\textit{Applied Mathematics and Computation} 2011; \textbf{217}: 5702-5728.
	
	\bibitem{Ozarslan} \"{O}zarslan MA, Gaboury S. Srivastava-Pint\'{e}r
	theorems for $2D$-Appell polynomials and their applications. \textit{%
		Mathematical Methods in the Applied Sciences} 2014; \textbf{37}(15):
	2198-2210.
	
	\bibitem{OzdenAMC2014} Ozden H. Simsek Y. Modification and unification of
	the Apostol-type numbers and polynomials and their applications. \textit{%
		Applied Mathematics and Computation} 2014; \textbf{235}: 338-351.
	
	\bibitem{Ozden AML} Ozden H. Simsek Y. A new extension of $q$-Euler numbers
	and polynomials related to their interpolation functions. \textit{Applied
		Mathematics Letters }2008; \textbf{21}(9): 934-939.
	
	\bibitem{Rainville} Rainville ED. \textit{Special functions.} The Macmillan
	Company: New York, 1960.
	
	\bibitem{Ryoo} Ryoo CS. Kim T. Jang L-C. Some Relationships between the
	analogs of Euler numbers and polynomials. \textit{Journal of Inequalities
		and Applications} 2007; \textbf{86052} (2007): 1-22.
	
	\bibitem{jnt2003} Simsek Y. $q$-analogue of the twisted $l$-series and $q$%
	-twisted Euler numbers. \textit{Journal of Number Theory} 2005; \textbf{100}%
	(2): 267-278.
	
	\bibitem{YsimsekKim} Simsek Y. On $q$-deformed Stirling numbers. \textit{%
		International Journal Mathematics Computation} 2012; \textbf{15}(2): 70-80.
	
	\bibitem{RJMP2010} Simsek Y. Special functions related to Dedekind-type
	DC-sums and their applications. \textit{Russian Journal of Mathematical
		Physics} 2010; \textbf{17}(4): 495-508.
	
	\bibitem{SimsekMANISA} Simsek Y. Identities associated with generalized
	Stirling type numbers and Eulerian type polynomials. \textit{Mathematical
		and Computational Applications}. 2013; \textbf{18}(3): 251-263.
	
	\bibitem{SimsekFPTA} Simsek Y. Generating functions for generalized Stirling
	type numbers, array type polynomials, Eulerian type polynomials and their
	alications. \textit{Fixed Point Theory and Applications} 2013; \textbf{87}:
	343-1355.
	
	\bibitem{AM2014} Simsek Y. Special numbers on analytic functions. \textit{%
		Applied Mathematics} 2014; \textbf{5}: 1091-1098.
	
	\bibitem{mmas2015} Simsek Y. Analysis of the Bernstein basis functions: an
	approach to combinatorial sums involving binomial coefficients and Catalan
	numbers. \textit{Mathematical Methods in the Applied Sciences} 2015; \textbf{%
		38}: 3007-3021.
	
	\bibitem{SimsekNEW} Simsek Y. New families of special numbers for computing
	negative order Euler numbers, preprint.
	
	\bibitem{mmasSBL} Simsek Y. Bayad A. Lokesha V. $q$-Bernstein polynomials
	related to $q$-Frobenius--Euler polynomials, $l$-functions, and $q$-Stirling
	numbers. \textit{Mathematical Methods in the Applied Sciences} 2012; \textbf{%
		35}:877-884.
	
	\bibitem{Spevy} Spivey MZ. Combinatorial Sums and Finite Differences. 
	\textit{Discrete mathematics} 2007; \textbf{307}(24): 3130-3146.
	
	\bibitem{Srivastava2011} Srivastava HM. Some generalizations and basic (or $%
	q $-) extensions of the Bernoulli, Euler and Genocchi polynomials, \textit{%
		Applied Mathematics and Information Sciences}. 2011; \textbf{5}: 390-444.
	
	\bibitem{SrivastavaChoi2012} Srivastava HM. Choi J. Zeta and $q$-Zeta
	Functions and Associated Series and Integrals, \textit{Elsevier Science
		Publishers}: Amsterdam, London and New York, 2012.
	
	\bibitem{SrivastavaLiu} Srivastava HM. Liu G.-D. Some identities and
	congruences involving a certain family of numbers. \textit{Russian Journal
		of Mathematical Physics }2009; \textbf{16}: 536-542.
	
	\bibitem{SrivastavaBook} Srivastava HM. Manocha H.L. A Treatise on
	Generating Functions, \textit{Ellis Horwood Limited Publisher}: Chichester
	1984.
	
	\bibitem{Qi} Wei C.-F. Qi F. Several closed expressions for the Euler
	Numbers. \textit{Journal of Inequalities and Applications} 2015; \textbf{219}%
	(2015): 1-8.
\end{thebibliography}
\end{document}